\documentclass[12pt]{amsart}

\usepackage{a4wide}

\usepackage{enumerate}

\newtheorem{theorem}{Theorem}[section]
\newtheorem{lemma}[theorem]{Lemma}
\newtheorem{proposition}[theorem]{Proposition}

\theoremstyle{definition}

\newcommand{\C}{\ensuremath{\mathbb{C}}}
\newcommand{\R}{\ensuremath{\mathbb{R}}}

\newcommand{\g}[1]{\ensuremath{\mathfrak{#1}}}

\DeclareMathOperator{\Ad}{Ad}
\DeclareMathOperator{\ad}{ad}

\begin{document}
\title[Cohomogeneity one actions on symmetric spaces of rank two]{Cohomogeneity one actions on some noncompact symmetric spaces of rank two}

\author[J.~Berndt]{J\"urgen Berndt}
\address{Department of Mathematics,
King's College London, United Kingdom.}
\email{jurgen.berndt@kcl.ac.uk}

\author[M. Dom\'{\i}nguez-V\'{a}zquez]{Miguel Dom\'{\i}nguez-V\'{a}zquez}
\address{Instituto de Matem\'atica Pura e Aplicada (IMPA), Brazil.}
\email{mvazquez@impa.br}

\thanks{The second author has been supported by a grant from IMPA, and projects MTM2009-07756 (Spanish Government) and GRC2013-045 (Galician Government)}

\subjclass[2010]{Primary 53C35; Secondary 57S20}


\begin{abstract}
We classify, up to orbit equivalence, the cohomogeneity one actions on the noncompact Riemannian symmetric spaces $G_2^{\mathbb C}/G_2$, $SL_3(\C)/SU_3$ and $SO^0_{2,n+2}/SO_2SO_{n+2}$, $n \geq 1$.
\end{abstract}

\keywords{Cohomogeneity one actions, symmetric spaces, parabolic subgroups, exceptional Lie group, real Grassmannian}

\maketitle

\section{Introduction}
A proper isometric action of a Lie group on a Riemannian manifold is of cohomogeneity one if the minimal codimension of its orbits is one. Understanding and classifying these actions on manifolds with a large group of isometries, such as symmetric spaces, is an important problem in Differential Geometry. On the one hand, such study reveals new connections between the geometric and algebraic properties of the symmetric space. On the other hand, this knowledge can be utilized to construct geometric structures on manifolds.

The classification of cohomogeneity one actions on irreducible simply connected compact Riemannian symmetric spaces was completed by Kollross in~\cite{Kollross:tams}, where a list of references  on this problem is included. The classification on reducible symmetric spaces is still open.

In the noncompact setting, the application of the techniques that work for the compact case and the use of the duality of symmetric spaces turn out to fail in general. The classification for rank one symmetric spaces of noncompact type was obtained by Berndt and Tamaru~\cite{BT:tams}, with the exception of the quaternionic hyperbolic spaces, where new examples have been found by D\'iaz-Ramos and Dom\'inguez-V\'azquez~\cite{DD:advances}. For symmetric spaces of higher rank, Berndt and Tamaru studied the case when the orbits form a regular foliation of the ambient space~\cite{BT:jdg}, and the case when there is a totally geodesic singular orbit~\cite{BT:tohoku}.

Recently, Berndt and Tamaru developed a conceptual approach to the classification of cohomogeneity one actions on irreducible symmetric spaces of noncompact type and arbitrary rank \cite{BT:crelle}. By considering the Chevalley and Langlands decompositions of parabolic subgroups of the isometry group of the symmetric space, they were able to show that all cohomogeneity one actions must either appear in the partial classifications in \cite{BT:jdg} and \cite{BT:tohoku}, or arise from two new methods proposed in \cite{BT:crelle}: the canonical extension and the nilpotent construction. The first method allows to construct cohomogeneity one actions on a symmetric space from cohomogeneity one actions on certain totally geodesic submanifolds, the so-called boundary components. Since these boundary components are symmetric spaces of lower rank, the canonical extension suggests a rank reduction approach for the classification. The second method is more intriguing, since only two new examples were found by this technique and not by any other method. Both examples are mysteriously related to the exceptional Lie group $G_2$.

Based on their structure result, Berndt and Tamaru were able to obtain the first complete classifications in some noncompact symmetric spaces of higher rank, namely, in the rank two spaces $SL_3(\R)/SO_3$, $SO^0_{2,3}/SO_2 SO_3$ and $G_2^2/SO_4$. However, their approach seemed to get very complicated when applied to other symmetric spaces.

The purpose of this article is to deepen into the investigation of the techniques introduced in \cite{BT:crelle}, in particular, of the most involved of the two new methods: the nilpotent construction. We will provide some new tools that allow us to obtain the classification of cohomogeneity one actions on the rank two symmetric space $G_2^\C/G_2$, the noncompact dual of the compact exceptional Lie group $G_2$. In view of \cite{BT:crelle}, this space would be the natural candidate to seek a new example produced by the nilpotent construction. However, we prove that this method only leads to the cohomogeneity one action described in~\cite[p.~143]{BT:crelle}. Our result, which is stated in Theorem~\ref{th:G2}, completes the classification of cohomogeneity one actions on symmetric spaces with root system of type $(G_2)$. As an application of these methods we also obtain in Theorem \ref{th:SU3} the classification of cohomogeneity one actions on the symmetric space $SL_3(\C)/SU_3$, the noncompact dual of the compact  Lie group $SU_3$.

Using a more elementary approach, we also derive the classification of cohomogeneity one actions on the indefinite two-plane Grassmannian $SO_{2,n+2}^0/SO_2SO_{n+2}$, $n\geq 1$; see Theorem~\ref{th:Grassmannian} for the classification. Note that the compact dual of this space is the standard oriented real two-plane Grassmannian, which coincides with the nondegenerate complex quadric $Q^{n+2}$ of the complex projective space $\C P^{n+3}$.

Beyond these classifications, we expect that the ideas we introduce in this article will be important in order to complete the study of cohomogeneity one actions on other symmetric spaces, in particular, those with rank two. As pointed out in \cite[p.~132]{BT:crelle}, dealing with spaces of rank higher than two will require some novel approach to understand the cohomogeneity one actions on reducible symmetric spaces.

This paper is organized as follows. Section~\ref{sec:BT} reviews the terminology, notation and previous results needed to understand the rest of the paper. In particular, we state Berndt and Tamaru's main result in Theorem~\ref{th:BT}. In Section~\ref{sec:nilpotent} we develop some new tools to simplify the application of the nilpotent construction method. Finally, in Sections~\ref{sec:G2} and~\ref{sec:Grassmannian} we derive the classification of cohomogeneity one actions on $G_2^\C/G_2$ and $SL_3(\C)/SU_3$, and on the noncompact real two-plane Grassmannians, respectively.

\section{The conceptual result of Berndt and Tamaru} \label{sec:BT}

This section is intended to provide the notation and terminology necessary to understand Berndt and Tamaru's result, as well as the rest of the paper. We follow the notation in \cite{BT:crelle} and we refer to the same article and to \cite{Knapp} for more detailed expositions.

\subsection{Parabolic subgroups}
Let $M=G/K$ be a connected Riemannian symmetric space of noncompact type and rank $r$. Here $G$ is the identity connected component of the isometry group of $M$ and $K$ is the isotropy group of $G$ at a point $o\in M$. As usual, we denote Lie algebras by gothic letters. Thus, let $\g{g}=\g{k}\oplus\g{p}$ be a Cartan decomposition of the real semisimple Lie algebra $\g{g}$ of $G$, where the subspace $\g{p}$ can be identified with the tangent space $T_o M$. Let $\theta$ be the corresponding Cartan involution, given by $\theta (X+Y)=X-Y$ for $X\in\g{k}$ and $Y\in\g{p}$, and $B$ the Killing form of $\g{g}$.  Then $\langle X,Y\rangle=-B(X, \theta Y)$ is a positive definite inner product on $\g{g}$, which satisfies $\langle \ad(X) Y, Z\rangle=-\langle Y,\ad(\theta X)Y\rangle$ for every $X$, $Y$, $Z\in\g{g}$. Henceforth, we will consider $\g{g}$ endowed with this inner product, and denote by $W\ominus V$ the orthogonal complement of $V$ in $W$ with respect to that inner product, for subspaces $V$ and $W$ of $\g{g}$ with $V \subset W$.

Fix a maximal abelian subspace $\g{a}$ of $\g{p}$ and consider the corresponding restricted root space decomposition
$\g{g}=\g{g}_0\oplus \left( \bigoplus_{\alpha\in\Sigma}\g{g}_\alpha\right)$,
where $\Sigma$ is the set of restricted roots, i.e.\ those nonzero covectors $\alpha$ on $\g{a}$ such that  $\g{g}_\alpha=\{X\in\g{g}:[H,X]=\alpha(H)X\text{ for all }H\in\g{a}\}$ is nonzero. It turns out that $\g{g}_0=\g{k}_0\oplus\g{a}$, where $\g{k}_0$ is the centralizer of $\g{a}$ in $\g{k}$. An explicit description of the subalgebra $\g{k}_0$ for each $\g{g}$ can be found in \cite{Tamaru:dga}. For each root $\alpha\in \Sigma$ we define the root vector $H_\alpha\in\g{a}$ by the relation $\alpha(H)=\langle H_\alpha, H\rangle$ for all $H\in\g{a}$, and, for each simple root $\alpha_i$, we also define its corresponding dual vector $H^i\in \g{a}$, determined by the fact that $\alpha_k(H^i)$ is the Kronecker delta $\delta_{ik}$.

Let $r$ be the rank of $M$, $\Lambda=\{\alpha_1,\dots,\alpha_r\}$ a set of simple roots of $\Sigma$, and denote by $\Sigma^+$ the corresponding set of positive roots. Define the nilpotent subalgebra $\g{n}=\bigoplus_{\alpha\in\Sigma^+}\g{g}_\alpha$. Then $\g{g}=\g{k}\oplus\g{a}\oplus\g{n}$ is an Iwasawa decomposition of $\g{g}$.

The conjugacy classes of parabolic subalgebras of $\g{g}$ are parametrized by the subsets $\Phi$ of $\Lambda$. The maximal proper parabolic subalgebras correspond to subsets $\Phi$ with cardinality $r-1$. We will restrict ourselves to describing these maximal proper parabolic subalgebras.
Let $\Phi_j= \Lambda\setminus\{\alpha_j\}$, denote by $\Sigma_j$ the root subsystem of $\Sigma$ generated by $\Phi_j$, and put $\Sigma_j^+=\Sigma_j\cap\Sigma^+$. We define a reductive subalgebra $\g{l}_j$ and a nilpotent subalgebra $\g{n}_j$ of $\g{g}$ by $\g{l}_j=\g{g}_0\oplus\bigl(\bigoplus_{\alpha\in\Sigma_j}\g{g}_\alpha\bigr)$ and $\g{n}_j=\bigoplus_{\alpha\in\Sigma^+\setminus\Sigma^+_j}\g{g}_\alpha$. Let $\g{a}_j=\bigcap_{\alpha\in\Phi_j}\ker\alpha$ and $\g{a}^j=\g{a}\ominus\g{a}_j=\bigoplus_{\alpha\in\Phi_j}\R H_\alpha$. The centralizer and normalizer of $\g{a}_j$ in $\g{g}$ is $\g{l}_j$. Moreover, $[\g{l}_j,\g{n}_j]\subset \g{n}_j$. Then, the Lie algebra $\g{q}_j=\g{l}_j\oplus\g{n}_j$ is the (maximal proper) parabolic subalgebra of $\g{g}$ associated with the subset $\Phi_j$ of $\Lambda$. The decomposition $\g{q}_j=\g{l}_j\oplus\g{n}_j$ is known as the Chevalley decomposition of $\g{q}_j$.

Now we define the reductive subalgebra $\g{m}_j=\g{l}_j\ominus\g{a}_j$ of $\g{g}$. It normalizes $\g{a}_j\oplus\g{n}_j$. Moreover, $\g{g}_j=[\g{m}_j,\g{m}_j]$ is a semisimple subalgebra of $\g{g}$, and the center $\g{z}_j=\g{m}_j\ominus\g{g}_j$ of $\g{m}_j$ is contained in $\g{k}_0$. The decomposition $\g{q}_j=\g{m}_j\oplus\g{a}_j\oplus\g{n}_j$ is called the Langlands decomposition of the parabolic subalgebra $\g{q}_j$. Every maximal proper parabolic subalgebra of $\g{g}$ is conjugate to some of the subalgebras $\g{q}_j$, for some $j\in\{1,\dots,r\}$, by means of an element in $K$.

We will also consider the subalgebra $\g{k}_j$ of $\g{k}$ given by $\g{k}_j=\g{q}_j\cap \g{k}=\g{l}_j\cap\g{k}=\g{m}_j\cap\g{k}=\g{k}_0\oplus\bigl(\bigoplus_{\alpha\in\Sigma_j}\g{k}_\alpha\bigr)$, where $\g{k}_\alpha=\g{k}\cap(\g{g}_{-\alpha}\oplus\g{g}_\alpha)$. Then $[\g{k}_j,\g{n}_j]\subset\g{n}_j$. Moreover, by defining the Lie triple system $\g{b}_j=\g{m}_j\cap\g{p}=\g{g}_j\cap\g{p}$, it turns out that $\g{g}_j=(\g{g}_j\cap\g{k}_j)\oplus\g{b}_j$ is a Cartan decomposition of the semisimple Lie algebra $\g{g}_j$, and $\g{a}^j$ is a maximal abelian subspace of~$\g{b}_j$.

Now we consider some groups associated with the Lie algebras described so far. We let $A$, $N$, $N_j$ and $G_j$ be the connected subgroups of $G$ with Lie algebras $\g{a}$, $\g{n}$, $\g{n}_j$ and $\g{g}_j$, respectively. If we define the reductive group $L_j$ as the centralizer of $\g{a}_j$ in $G$, then $Q_j=L_jN_j$ is the maximal proper parabolic subgroup of $G$ associated with the subset $\Phi_j$ of $\Lambda$. We also define $K_j=L_j\cap K$ and $M_j=K_jG_j$. Then $M_j$ is a closed reductive subgroup of $L_j$, $K_j$ is a maximal compact subgroup of $M_j$, and the center $Z_j$ of $M_j$ is a compact subgroup of $K_j$.

The orbit $B_j=G_j\cdot o$ of the $G_j$-action on $M=G/K$ through $o$ is a connected totally geodesic submanifold of $M$ with $T_o B_j\cong\g{b}_j$. $B_j$ is itself a symmetric space of noncompact type and rank $r-1$, and is called a boundary component of $M$. Moreover, $B_j=G_j\cdot o=M_j\cdot o\cong G_j/(G_j\cap K_j)\cong M_j/K_j$.

Finally, we have an analytic diffeomorphism $M_j\times A_j\times N_j\to Q_j$ which induces an analytic diffeomorphism $B_j\times A_j\times N_j\to M$, $(m\cdot o, a, n)\mapsto (man)\cdot o$, known as a horospherical decomposition of the symmetric space $M$. Note that the Lie triple system $\g{a}_j\cong\R$ determines a geodesic $A_j\cdot o$ in $M$, and we have that $L_j\cdot o\cong B_j\times (A_j\cdot o)$.

\subsection{Classes of cohomogeneity one actions} \label{subsec:classes}
Now we describe the different types of cohomogeneity one actions that appear in the structure result of Berndt and Tamaru.

Let $\ell$ be a one-dimensional subspace of $\g{a}$. Then the connected subgroup $H_\ell$ of $G$ with Lie algebra $\g{h}_\ell=(\g{a}\ominus\ell)\oplus\g{n}$ acts on $M$ with cohomogeneity one, giving rise to a Riemannian foliation whose orbits are congruent to each other. Two choices $\ell$ and $\ell'$ yield orbit equivalent actions if and only if there exists a symmetry of the Dynkin diagram of $\Sigma$ whose corresponding automorphism of $\g{a}$ maps $\ell$ to $\ell'$. See \cite{BT:jdg} for more details.

Now let $\ell$ be a one-dimensional subspace of a simple root space $\g{g}_{\alpha_j}$. Then, the connected subgroup $H_j$ of $G$ with Lie algebra $\g{h}_j=\g{a}\oplus(\g{n}\ominus\ell)$ acts on $M$ with cohomogeneity one, and the orbits form a Riemannian foliation with exactly one minimal leaf. Two choices $\ell\subset \g{g}_{\alpha_j}$ and $\ell'\subset \g{g}_{\alpha_k}$ yield orbit equivalent actions if and only if there is a Dynkin diagram symmetry mapping $\alpha_j$ to $\alpha_k$. Again, see \cite{BT:jdg} for details.

Let $L$ be a maximal proper reductive subgroup of $G$. If $H$ is a subgroup of $L$ acting on $M$ with cohomogeneity one, then the actions of $H$ and $L$ are orbit equivalent and have a totally geodesic orbit, which is singular if $M$ is irreducible and different from a real hyperbolic space. These actions on an irreducible noncompact symmetric space $M$ have been classified in~\cite{BT:tohoku}.

Consider now the Langlands decomposition $Q_j=M_jA_jN_j$ of a maximal proper parabolic subgroup $Q_j$ of $G$ obtained by the choice of the subset $\Phi_j=\Lambda\setminus\{\alpha_j\}$ of $\Lambda$. The corresponding boundary component $B_j$ is a noncompact symmetric space of rank $r-1$ embedded in $M$ as a totally geodesic submanifold. If $H_{\Phi_j}$ is a connected subgroup of the isometry group of $B_j$ acting on $B_j$ with cohomogeneity one, then $H_j^\Lambda=H_{\Phi_j} A_j N_j$ is a connected subgroup of  $Q_j$ acting on $M$ with cohomogeneity one. We say that this action has been obtained by canonical extension of a cohomogeneity one action on the boundary component $B_j$. If two connected closed subgroups $H_{\Phi_j}$, $H_{\Phi_j}'$ of the isometry group $I(B_j)$ of $B_j$ act on $B_j$ with cohomogeneity one and their actions are orbit equivalent by an isometry in the identity component $I^0(B_j)$, then their canonical extensions to $M$ are orbit equivalent as well. More details can be found in \cite{BT:crelle}.

Finally, we describe the so-called nilpotent construction method, which was introduced in \cite{BT:crelle} and will be of fundamental relevance for this work. Consider the Chevalley decomposition $Q_j=L_jN_j$ of a maximal proper parabolic subgroup $Q_j$ of $G$, and recall that $L_j=M_jA_j$. The dual vector $H^j\in \g{a}$ of the simple root $\alpha_j$ induces a gradation $\bigoplus_{\nu\geq 1}\g{n}^\nu_j$ of $\g{n}_j$, where $\g{n}_j^\nu$ is the sum of all root spaces corresponding to positive roots $\alpha\in \Sigma^+\setminus\Sigma_j^+$ with $\alpha(H^j)=\nu$. Let $\g{v}$ be a subspace of $\g{n}^1_j$ with dimension at least $2$. Then $\g{n}_{j,\g{v}}=\g{n}_j\ominus\g{v}$ is a subalgebra of $\g{n}$. Let $N_{j,\g{v}}$ be the corresponding connected subgroup of $N_j$. Denote by $\Theta$ the Cartan involution of $G$ associated with $\theta$, and by the superindex $\cdot^0$ the identity connected component of a group. Assume that:
\begin{enumerate}[{\rm (i)}]
\item $N^0_{L_j}(\g{n}_{j,\g{v}})=\Theta N^0_{L_j}(\g{v})$ acts transitively on $B_j\times (A_j\cdot o)$, and
\item $N^0_{K_j}(\g{n}_{j,\g{v}})=N^0_{K_j}(\g{v})$ acts transitively on the unit sphere of $\g{v}$.
\end{enumerate}
Then $H_{j,\g{v}}=N^0_{L_j}(\g{n}_{j,\g{v}})N_{j,\g{v}}$ is a connected subgroup of $Q_j$ which acts on $M$ with cohomogeneity one and singular orbit $H_{j,\g{v}}\cdot o$. Moreover, if $\g{v}$ and $\g{v}'$ are two such subspaces which are conjugate by an element in $K_j$, then the cohomogeneity one actions by $H_{j,\g{v}}$ and $H_{j,\g{v}'}$ on $M$ are orbit equivalent.

We are now ready to state the main result of \cite{BT:crelle}, which guarantees that all cohomogeneity one actions on irreducible symmetric spaces of noncompact type can be obtained by one of the five methods described above.

\begin{theorem}\cite{BT:crelle}\label{th:BT}
Let $M=G/K$ be a connected irreducible Riemannian symmetric space of noncompact type and rank $r$, and let $H$ be a connected subgroup of $G$ acting on $M$ with cohomogeneity one. Then one of the following statements holds:
\begin{enumerate}[{\rm (1)}]
\item The orbits form a Riemannian foliation on $M$ and one of the following two cases holds:
\begin{enumerate}[{\rm (i)}]
\item The $H$-action is orbit equivalent to the action of $H_\ell$ for some one-dimensional subspace $\ell$ of $\g{a}$.
\item The $H$-action is orbit equivalent to the action of $H_j$ for some $j\in\{1,\dots, r\}$.
\end{enumerate}
\item There exists exactly one singular orbit and one of the following two cases holds:
\begin{enumerate}[{\rm (i)}]
\item $H$ is contained in a maximal proper reductive subgroup $L$ of $G$, the actions of $H$ and $L$ are orbit equivalent, and the singular orbit is totally geodesic.
\item $H$ is contained in a maximal proper parabolic subgroup $Q_j$ of $G$ and one of the following two subcases holds:
\begin{enumerate}[{\rm (a)}]
\item The $H$-action is orbit equivalent to the canonical extension of a cohomogeneity one action with a singular orbit on the boundary component $B_j$ of $M$.
\item The $H$-action is orbit equivalent to the action of a group $H_{j,\g{v}}$ obtained by nilpotent construction, for some subspace $\g{v}\subset\g{n}^1_j$ with $\dim \g{v}\geq 2$.
\end{enumerate}
\end{enumerate}
\end{enumerate}
\end{theorem}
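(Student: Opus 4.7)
My plan is first to replace $H$ by its closure (still acting with cohomogeneity one) and then to split the argument according to the orbit space. Because $M$ is simply connected and noncompact, the orbit space is either $\R$ (so the orbits form a Riemannian foliation of codimension one) or the half-line $[0,\infty)$ (so there is a unique singular orbit). The foliation case is disposed of immediately by invoking the classification of codimension-one homogeneous foliations on irreducible noncompact symmetric spaces from \cite{BT:jdg}: up to conjugation, the leaf through $o$ is the orbit of either some $H_\ell$ with $\ell\subset\g{a}$ or some $H_j$ with a line $\ell\subset\g{g}_{\alpha_j}$, which is case~(1).

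For the singular-orbit case, the crucial structural step is to locate $\g{h}$ inside a maximal proper subalgebra of $\g{g}$. Since $H\neq G$, $\g{h}$ is contained in some maximal proper subalgebra, and the classical structure theory of real semisimple Lie algebras asserts that every such subalgebra is either reductive or parabolic. If it is reductive, the corresponding subgroup $L$ has the same principal orbits as $H$, the $L$-orbit through $o$ is totally geodesic, and the classification in \cite{BT:tohoku} yields case~(2i).

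If instead $\g{h}\subset\g{q}_j$ for some maximal proper parabolic $\g{q}_j$, I would exploit the Chevalley decomposition $\g{q}_j=\g{l}_j\oplus\g{n}_j$ and consider $\g{v}=\g{n}_j\ominus(\g{h}\cap\g{n}_j)$. When $\g{v}=0$, the horospherical decomposition $M\cong B_j\times A_j\times N_j$ allows one to identify the $H$-action with the canonical extension of a cohomogeneity one action on the boundary component $B_j$, giving case~(2iia). When $\g{v}\neq 0$, the $\ad(H^j)$-grading of $\g{n}_j$, together with the fact that $\g{h}\cap\g{n}_j$ is stable under $\ad(\g{h}\cap\g{l}_j)$, should force $\g{v}\subset\g{n}_j^1$; cohomogeneity one and the existence of a proper singular orbit then deliver $\dim\g{v}\geq 2$ and the transitivity conditions~(i)--(ii) from the definition of the nilpotent construction, yielding case~(2iib). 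The main obstacle is precisely this last step: pinning $\g{v}$ inside $\g{n}_j^1$ requires a careful analysis of how $N_{L_j}(\g{h}\cap\g{n}_j)$ acts on the graded pieces of $\g{n}_j$, and the transitivity conditions demand a detailed study of the slice representation at the singular orbit together with the fibred structure of the horospherical decomposition; this is where the refined parabolic structure theory developed in \cite{BT:crelle} becomes indispensable.
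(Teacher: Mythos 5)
First, a point of comparison: the paper you are working from does not prove this theorem at all --- it is imported verbatim from \cite{BT:crelle} and used as a black box, so the only fair benchmark is the proof in that reference. Your sketch reproduces its top-level architecture correctly (pass to the closure of $H$; the orbit space of a cohomogeneity one action on a Hadamard manifold is $\R$ or $[0,\infty)$; dispose of the foliation case via \cite{BT:jdg}; in the singular-orbit case place $\g{h}$ in a maximal proper subalgebra and invoke the Mostow--Karpelevich dichotomy ``reductive or parabolic''; handle the reductive case via \cite{BT:tohoku}). Up to that point the outline is sound.

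The genuine gap is the parabolic case, which is the heart of \cite{BT:crelle} and which your proposal resolves by assertion. Setting $\g{v}=\g{n}_j\ominus(\g{h}\cap\g{n}_j)$ and claiming that $\g{v}=0$ yields a canonical extension while $\g{v}\neq 0$ yields a nilpotent construction is too coarse on both sides. If $\g{v}=0$, you still need to know that the projection of $\g{h}$ to $\g{l}_j=\g{m}_j\oplus\g{a}_j$ decouples into $A_j$ together with a cohomogeneity one action on $B_j$ having a singular orbit; if the projection to $\g{a}_j$ is not surjective, or is coupled to the $\g{m}_j$-factor, the orbit structure in the horospherical decomposition $B_j\times A_j\times N_j$ is not that of a canonical extension, and showing that one may enlarge $H$ (without changing orbits) so that $A_jN_j\subset H$ is a substantive step in \cite{BT:crelle}. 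If $\g{v}\neq 0$, the $\ad(H^j)$-grading and $\ad(\g{h}\cap\g{l}_j)$-invariance of $\g{h}\cap\g{n}_j$ do \emph{not} by themselves force $\g{v}\subset\g{n}_j^1$: a priori $\g{h}\cap\g{n}_j$ can meet every graded piece in arbitrary codimension, and the reduction to $\g{v}\subset\g{n}_j^1$ in \cite{BT:crelle} rests on the minimality of the singular orbit and an analysis of the second fundamental form and slice representation at a point of that orbit. The same analysis is what produces $\dim\g{v}\geq 2$ and the transitivity conditions (i)--(ii); your text concedes that this is ``where the refined parabolic structure theory \ldots becomes indispensable,'' which amounts to deferring the decisive part of the proof to the very reference you are meant to be reproducing. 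As it stands, the proposal is an accurate roadmap but not a proof.
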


\section{The nilpotent construction}\label{sec:nilpotent}

As pointed out in \cite{BT:crelle}, the application of the nilpotent construction method in symmetric spaces of rank greater than one seems to be quite difficult to deal with. In fact, only two examples of cohomogeneity one actions on symmetric spaces of rank at least two have been obtained by this method and not by any other method (see~\cite[p.~143]{BT:crelle}). Hence, it seems reasonable to deepen into the study of this method by trying to simplify its application and by providing new tools to use it. This is the purpose of this section.

We will use the notation and terminology introduced in Section~\ref{sec:BT} and, in particular, the notation involved in the description of the nilpotent construction method. As above, we fix a subset $\Phi_j=\Lambda\setminus\{\alpha_j\}$ of $\Lambda$, for some $j\in\{1,\dots,r\}$. We start with a general lemma.

\begin{lemma}\label{lemma:ad}
We have that $\ad(H)X=\nu\alpha_j(H)X$, for each $H\in\g{a}_j$ and $X\in\g{n}_j^\nu$.
\end{lemma}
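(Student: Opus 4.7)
The plan is to reduce the claim to a statement about how a positive root $\alpha$ restricts to $\g{a}_j$, using the fact that on each root space $\g{g}_\alpha$ the operator $\ad(H)$ acts by the scalar $\alpha(H)$.

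First I would reduce to the case where $X \in \g{g}_\alpha$ for a single root $\alpha \in \Sigma^+ \setminus \Sigma_j^+$ with $\alpha(H^j) = \nu$, since $\g{n}_j^\nu$ is by definition a sum of such root spaces and the identity we want is linear in $X$. For such $X$, the restricted root space definition immediately gives $\ad(H)X = \alpha(H)X$ for every $H \in \g{a}$, so the whole statement comes down to proving
\[
\alpha(H) = \nu \alpha_j(H) \quad \text{for all } H \in \g{a}_j.
\]

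Next I would expand $\alpha$ in the basis of simple roots as $\alpha = \sum_{k=1}^r n_k \alpha_k$ with nonnegative integer coefficients $n_k$. Evaluating at the dual vector $H^j$, whose defining property is $\alpha_k(H^j) = \delta_{jk}$, yields $\alpha(H^j) = n_j$. By the hypothesis $\alpha(H^j) = \nu$, we therefore obtain $n_j = \nu$. On the other hand, for $H \in \g{a}_j = \bigcap_{\alpha_k \in \Phi_j} \ker \alpha_k$ we have $\alpha_k(H) = 0$ for all $k \neq j$, hence
\[
\alpha(H) = \sum_{k=1}^r n_k \alpha_k(H) = n_j \alpha_j(H) = \nu \alpha_j(H),
\]
which is precisely what was needed.

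There is no real obstacle here: the lemma is essentially a bookkeeping statement combining the definitions of $\g{a}_j$, of the grading induced by $H^j$, and of the restricted root space decomposition. The only small point to be careful about is that a root $\alpha$ in $\Sigma^+ \setminus \Sigma_j^+$ is expressed as a nonnegative integer combination of simple roots with $n_j \geq 1$ (so that the grading index $\nu \geq 1$ is consistent), which follows from $\alpha$ being positive but not in the subsystem $\Sigma_j$ generated by $\Phi_j = \Lambda \setminus \{\alpha_j\}$.
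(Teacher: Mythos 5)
Your proof is correct and follows essentially the same route as the paper: decompose $X$ into root vectors, expand each root $\alpha$ in the simple roots, use $\alpha(H^j)=\nu$ to identify the coefficient of $\alpha_j$ as $\nu$, and kill the remaining terms via $H\in\g{a}_j=\bigcap_{k\neq j}\ker\alpha_k$. The paper simply writes $\alpha=\nu\alpha_j+\sum_{k\neq j}x_{\alpha,k}\alpha_k$ directly, whereas you derive $n_j=\nu$ explicitly; the content is identical.
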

\begin{proof}
Let $H\in\g{a}_j$ and $X\in\g{n}_j^\nu$. Then we can write $X=\sum X_\alpha$,
where $X_\alpha\in\g{g}_\alpha$ and $\alpha$ ranges over all roots in $\Sigma^+\setminus\Sigma^+_j$ such that $\alpha(H^j)=\nu$. Note that, for each one of such roots $\alpha$, there exist integers $x_{\alpha,k}$ such that $\alpha=\nu\alpha_j+\sum_{k\neq j} x_{\alpha,k} \alpha_k$. Then
\[
[H,X]=\sum_{\begin{subarray}{c}\alpha\in\Sigma^+\setminus\Sigma^+_j\\ \alpha(H^j)=\nu\end{subarray}}[H,X_\alpha]= \sum_{\begin{subarray}{c}\tiny\alpha\in\Sigma^+\setminus\Sigma^+_j\\ \alpha(H^j)=\nu\end{subarray}}\Big(\nu\alpha_j(H)+\sum_{k\neq j}x_{\alpha,k}\alpha_k(H)\Big)X_\alpha=\nu\alpha_j(H)X,
\]
where in the last equality we used that $H\in\g{a}_j=\bigcap_{k\neq j}\ker \alpha_k$.
\end{proof}

Now we can prove a result that simplifies the application of the nilpotent construction.

\begin{proposition}\label{prop:subtle}
Assume that $\dim \g{n}^1_j\geq 2$ and let $\g{v}$ be a linear subspace of $\g{n}^1_j$ with $\dim \g{v}\geq 2$ such that
\begin{enumerate}[{\rm (i)}]
\item $N^0_{M_j}(\g{n}_{j,\g{v}})=\Theta N^0_{M_j}(\g{v})$ acts transitively on $B_j$, and
\item $N^0_{K_j}(\g{n}_{j,\g{v}})=N^0_{K_j}(\g{v})$ acts transitively on the unit sphere of $\g{v}$.
\end{enumerate}
Then $H_{j,\g{v}}=N^0_{L_j}(\g{n}_{j,\g{v}})N_{j,\g{v}}$ acts on $M$ with cohomogeneity one and $H_{j,\g{v}}\cdot o$ is a singular orbit of this action. Moreover, if $\g{v}_1$ and $\g{v}_2$ are two such subspaces which are conjugate by an element of $K_j$, then the cohomogeneity one actions of $H_{j,\g{v}_1}$ and $H_{j,\g{v}_2}$ on $M$ are orbit equivalent.
\end{proposition}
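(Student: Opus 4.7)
The plan is to reduce Proposition~\ref{prop:subtle} to the original nilpotent construction criterion recalled in Section~\ref{subsec:classes} by showing that the weaker hypotheses on $M_j$ automatically upgrade to the hypotheses (i)--(ii) involving $L_j$ stated there. The whole strategy rests on exploiting the direct product structure $L_j = M_j A_j$ coming from the Langlands decomposition, together with Lemma~\ref{lemma:ad}.

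First, I would observe that by Lemma~\ref{lemma:ad}, $\g{a}_j$ acts on $\g{n}_j^\nu$ by scalars: for $H\in\g{a}_j$ and $X\in\g{n}_j^\nu$, $\ad(H)X=\nu\alpha_j(H)X$. Consequently every linear subspace of $\g{n}_j^1$, in particular $\g{v}$, is $\ad(\g{a}_j)$-invariant and hence normalized by $A_j$. Since the root space decomposition of $\g{n}_j$ is orthogonal with respect to $\langle\cdot,\cdot\rangle$ and each piece $\g{n}_j^\nu$ is $A_j$-invariant, the orthogonal complement $\g{n}_{j,\g{v}}=\g{n}_j\ominus\g{v}$ is normalized by $A_j$ as well.

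Next, from the Langlands decomposition we have $\g{l}_j=\g{m}_j\oplus\g{a}_j$ with $\g{a}_j$ central in $\g{l}_j$, so $L_j=M_j\times A_j$ as a direct product of groups. Together with the previous paragraph this yields, for $X\in\{\g{v},\g{n}_{j,\g{v}}\}$, the equality $N^0_{L_j}(X)=N^0_{M_j}(X)\cdot A_j$. Moreover $\Theta$ acts on $A_j$ by inversion, and therefore $\Theta A_j=A_j$ setwise. Combining these facts with hypothesis~(i) of the proposition, one obtains
\[
\Theta N^0_{L_j}(\g{v})=\bigl(\Theta N^0_{M_j}(\g{v})\bigr)\cdot A_j=N^0_{M_j}(\g{n}_{j,\g{v}})\cdot A_j=N^0_{L_j}(\g{n}_{j,\g{v}}),
\]
which is the first half of condition~(i) of the nilpotent construction in Section~\ref{subsec:classes}. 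For the transitivity part, the horospherical decomposition identifies $L_j\cdot o$ with $B_j\times(A_j\cdot o)$; since $N^0_{M_j}(\g{n}_{j,\g{v}})$ is assumed to act transitively on $B_j=M_j\cdot o$ and $A_j$ acts transitively on the one-dimensional factor $A_j\cdot o$, the product $N^0_{L_j}(\g{n}_{j,\g{v}})=N^0_{M_j}(\g{n}_{j,\g{v}})\cdot A_j$ acts transitively on $B_j\times(A_j\cdot o)$, as required.

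Condition~(ii) is identical in the two formulations, so both hypotheses of the original nilpotent construction are satisfied; the conclusion that $H_{j,\g{v}}=N^0_{L_j}(\g{n}_{j,\g{v}})N_{j,\g{v}}$ acts on $M$ with cohomogeneity one and singular orbit $H_{j,\g{v}}\cdot o$ then follows directly. The final orbit-equivalence statement for $K_j$-conjugate subspaces is simply the orbit-equivalence clause already present in the original construction. There is no real obstacle here; the only subtlety is to verify carefully that $\Theta$, $A_j$ and the $M_j$-normalizers commute with the operations $N^0(\cdot)$ in the way claimed, which is what allows the clean reduction from $L_j$ to $M_j$ in condition~(i).
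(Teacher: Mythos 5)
Your proposal is correct and follows essentially the same route as the paper: both reduce to the original nilpotent construction criterion by using Lemma~\ref{lemma:ad} to show that $A_j$ normalizes $\g{n}_{j,\g{v}}$, and then exploit the direct product $L_j = M_j \times A_j$ to upgrade transitivity on $B_j$ to transitivity on $B_j\times(A_j\cdot o)$. The only difference is that the paper additionally proves the converse implication (that transitivity of $N^0_{L_j}(\g{n}_{j,\g{v}})$ on $B_j\times(A_j\cdot o)$ forces transitivity of $N^0_{M_j}(\g{n}_{j,\g{v}})$ on $B_j$, via the horospherical decomposition), which is not needed for the statement as written but is used implicitly later.
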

\begin{proof}
The result will follow from the hypotheses of the nilpotent construction method (see \S\ref{subsec:classes} above, or \cite[Proposition~4.3]{BT:crelle}) once we show that condition (i) in the statement is equivalent to the fact that $N^0_{L_j}(\g{n}_{j,\g{v}})$ acts transitively on $F_j=(M_j\cdot o)\times(A_j\cdot o)\cong B_j\times\R$.

First, assume that $N^0_{M_j}(\g{n}_{j,\g{v}})$ acts transitively on $B_j$. Lemma~\ref{lemma:ad} implies that $A_j$ normalizes $\g{n}_{j,\g{v}}$, and hence $A_j\subset N^0_{L_j}(\g{n}_{j,\g{v}})$, which together with the hypothesis means that $N^0_{L_j}(\g{n}_{j,\g{v}})$ acts transitively on $F_j$, as desired.

Conversely, assume now that $N^0_{L_j}(\g{n}_{j,\g{v}})$ acts transitively on $B_j\times(A_j\cdot o)$.
Let $p\in B_j$. By hypothesis and since $L_j$ is the direct product $M_j\times A_j$, there is an element $am\in N^0_{L_j}(\g{n}_{j,\g{v}})$, with $a\in A_j$ and $m\in M_j$, such that $(am)\cdot o=p$. But because of the diffeomorphism $M\cong B_j\times A_j\times N_j$ given by the horospherical decomposition, the element $a$ must be the identity, and hence $p=m\cdot o$ with $m\in N^0_{L_j}(\g{n}_{j,\g{v}})\cap M_j$. Therefore $N^0_{M_j}(\g{n}_{j,\g{v}})$ acts transitively on $B_j$.
\end{proof}

We conclude this section with a result that will help us to determine all subspaces $\g{v}$ of $\g{n}^1_j$ satisfying the conditions of Proposition~\ref{prop:subtle}. First, we need the following lemma.

\begin{lemma}\label{lemma:splitting}
Every maximal proper subalgebra $\tau$ of ${\mathfrak m}_j$ can be written as a direct sum 
\[ \tau = \pi_1(\tau) \oplus \pi_2(\tau) \subset {\mathfrak g}_j \oplus {\mathfrak z}_j ,\]
where $\pi_1 \colon {\mathfrak m}_j \to {\mathfrak g}_j$, $\pi_2 \colon {\mathfrak m}_j \to {\mathfrak z}_j$ are the canonical orthogonal projection maps.
\end{lemma}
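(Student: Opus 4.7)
My plan is to build the subalgebra $\tau' = \pi_1(\tau) \oplus \pi_2(\tau)$, show it sits between $\tau$ and $\mathfrak{m}_j$ as a proper or improper intermediate subalgebra, and then use maximality of $\tau$ to force $\tau' = \tau$. First I would check the elementary structural fact: since $\mathfrak{z}_j$ is the center of $\mathfrak{m}_j$ (so it is an ideal), the projection $\pi_1 \colon \mathfrak{m}_j \to \mathfrak{g}_j$ is a Lie algebra homomorphism, and hence $\pi_1(\tau)$ is a subalgebra of $\mathfrak{g}_j$. The projection $\pi_2(\tau)$ is automatically a subalgebra of the abelian $\mathfrak{z}_j$. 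Because $[\mathfrak{g}_j,\mathfrak{z}_j]=0$, the direct sum $\tau' = \pi_1(\tau)\oplus\pi_2(\tau)$ is itself a subalgebra of $\mathfrak{m}_j$, and $\tau \subset \tau'$ simply because every $X\in\tau$ decomposes as $X = \pi_1(X)+\pi_2(X)$. By maximality of $\tau$, either $\tau=\tau'$ (which is exactly the conclusion) or $\tau' = \mathfrak{m}_j$, i.e.\ $\pi_1(\tau)=\mathfrak{g}_j$ and $\pi_2(\tau)=\mathfrak{z}_j$. So the only thing left is to rule out the latter alternative.

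To rule it out, I would assume for contradiction that $\pi_1(\tau)=\mathfrak{g}_j$ and $\pi_2(\tau)=\mathfrak{z}_j$, and set $\tau_2 = \tau\cap \mathfrak{z}_j = \ker(\pi_1|_\tau)$. For each $X\in\mathfrak{g}_j$ pick some $Z\in\mathfrak{z}_j$ with $X+Z\in\tau$; any two such choices differ by an element of $\tau_2$, so the assignment
\[
\phi \colon \mathfrak{g}_j \to \mathfrak{z}_j/\tau_2, \qquad \phi(X) = Z + \tau_2,
\]
is well-defined and linear. Because $\tau$ is a subalgebra and $[\mathfrak{g}_j,\mathfrak{z}_j]=[\mathfrak{z}_j,\mathfrak{z}_j]=0$, a short bracket computation shows that $\phi$ is a Lie algebra homomorphism from a semisimple algebra into an abelian one, hence $\phi \equiv 0$. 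This means that for every $X\in\mathfrak{g}_j$ one can already take $Z\in\tau_2\subset\tau$, so $X = (X+Z)-Z \in \tau$; therefore $\mathfrak{g}_j\subset\tau$. Combined with $\pi_2(\tau)=\mathfrak{z}_j$, this forces $\mathfrak{z}_j\subset\tau$ as well, so $\tau=\mathfrak{m}_j$, contradicting the hypothesis that $\tau$ is proper.

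The main obstacle I anticipate is organising the argument cleanly around the map $\phi$: it is tempting to try a direct ideal-theoretic argument (treating $\tau\cap\mathfrak{g}_j$ as an ideal of $\mathfrak{g}_j$), but that is not generally true for a subalgebra that is only maximal, not normal. Passing through the induced homomorphism $\phi\colon\mathfrak{g}_j\to\mathfrak{z}_j/\tau_2$ is the one nontrivial point, and it is where the semisimplicity of $\mathfrak{g}_j$ and the abelianness of $\mathfrak{z}_j$ are essential, exactly the structural ingredients provided by the Langlands setup in Section~\ref{sec:BT}.
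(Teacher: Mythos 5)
Your proposal is correct and follows essentially the same route as the paper: form $\pi_1(\tau)\oplus\pi_2(\tau)$, invoke maximality to reduce to the case $\pi_1(\tau)=\mathfrak{g}_j$, and then use the perfectness of the semisimple algebra $\mathfrak{g}_j$ to conclude $\mathfrak{g}_j\subset\tau$ and derive a contradiction. The only difference is cosmetic: where you introduce the homomorphism $\phi\colon\mathfrak{g}_j\to\mathfrak{z}_j/\tau_2$ and argue it vanishes, the paper gets the same conclusion more directly from $[\tau,\tau]=\pi_1[\tau,\tau]=[\mathfrak{g}_j,\mathfrak{g}_j]=\mathfrak{g}_j$.
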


\begin{proof}
Since the decomposition $ {\mathfrak m}_j = {\mathfrak g}_j \oplus {\mathfrak z}_j $ is a direct sum of Lie algebras, the two projections $\pi_1$ and $\pi_2$ are Lie algebra homomorphisms. Let $\tau$ be a maximal proper subalgebra of ${\mathfrak m}_j$. Then $\pi_1(\tau)$ is a subalgebra of ${\mathfrak g}_j$ and $\pi_2(\tau)$ is a subalgebra of ${\mathfrak z}_j$, and therefore $\pi_1(\tau) \oplus \pi_2(\tau)$
is a subalgebra of ${\mathfrak g}_j \oplus {\mathfrak z}_j = {\mathfrak m}_j$. We obviously have
\[ \tau \subset \pi_1(\tau) \oplus \pi_2(\tau), \]
and since $\tau$ is maximal and proper in ${\mathfrak m}_j$ we must have either
\[\qquad \tau = \pi_1(\tau) \oplus \pi_2(\tau) \qquad \text{or} \qquad {\mathfrak m}_j = \pi_1(\tau) \oplus \pi_2(\tau). \]
In the first case we get a direct sum decomposition of $\tau$ into subalgebras of ${\mathfrak g}_j$ and ${\mathfrak z}_j$. In the second case we get $\pi_1(\tau) = {\mathfrak g}_j$. Since the derived subalgebra $[\tau,\tau]$ of $\tau$ is contained in ${\mathfrak g}_j$ we have
\[ [\tau,\tau] = \pi_1[\tau,\tau] = [\pi_1(\tau),\pi_1(\tau)] = [{\mathfrak g}_j,{\mathfrak g}_j] = {\mathfrak g}_j ,\]
using the fact that ${\mathfrak g}_j$ is semisimple.  This implies $\pi_1(\tau) = {\mathfrak g}_j \subset \tau$ and therefore also $\pi_2(\tau) \subset \tau$. Altogether this gives ${\mathfrak m}_j = \pi_1(\tau) \oplus \pi_2(\tau) \subset \tau$, which contradicts the assumption that $\tau$ is a proper subalgebra of ${\mathfrak m}_j$. 
\end{proof}

Recall that, if $r\geq 2$, then the symmetric space $B_j=G_j/(G_j\cap K_j)$ has rank $r-1$. We assume now that $\g{g}_j$ does not have any nonzero compact ideal. Then the set $\Lambda_j = \Lambda \setminus \{\alpha_j\}$ can be regarded as a set of simple roots for the semisimple Lie algebra $\g{g}_j=(\g{g}_j\cap\g{k}_j)\oplus\g{b}_j$ with respect to the maximal abelian subspace ${\mathfrak a}^j \subset {\mathfrak b}_j$.
Every maximal proper parabolic subalgebra of $\g{g}_j$ is  conjugate via an element of $G_j\cap K_j$ to some of the $r-1$ parabolic subalgebras of $\g{g}_j$ determined by some subset of $\Lambda_j$ of the form $\Lambda_j \setminus\{\alpha_l\}$ for some $l\in\{1,\dots,r\}$, $l\neq j$. We will denote by $\g{q}_{j,l}$ the corresponding parabolic subalgebra of $\g{g}_j$. Note that $\g{q}_{j,l}$ is the intersection of $\g{g}_j$ and the parabolic subalgebra of $\g{g}$ corresponding to $\Lambda \setminus \{\alpha_j,\alpha_l\}$.

Moreover, let us define $\mathcal{V}$ as the set of linear subspaces $\g{v}$ of $\g{n}^1_j$ satisfying the conditions of Proposition~\ref{prop:subtle}. For each $l\in\{1,\dots,r\}$, $l\neq j$, we also define $\mathcal{V}_l$ as the subset of $\mathcal{V}$ given by all subspaces $\g{v}$ such that $N_{\g{m}_j}(\g{v})=\theta N_{\g{m}_j}(\g{n}_{j,\g{v}})$ is contained in $\g{q}_{j,l}\oplus\g{z}_j$. 

\begin{proposition}\label{prop:v}
Assume that the adjoint representation of $\g{g}_j\cap\g{k}_j$ on $\g{n}^1_j$ is irreducible, and $\g{g}_j$ has no nonzero compact ideals. Then, with the notation above:
\[
\mathcal{V}\setminus\{\g{n}^1_j\}=\bigcup_{\begin{subarray}{c}l=1\\l\neq j\end{subarray}}^r \Ad(K_j)\mathcal{V}_l.
\]
\end{proposition}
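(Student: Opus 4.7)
The plan is to fix $\g{v}\in\mathcal{V}\setminus\{\g{n}_j^1\}$, write $\g{t}:=N_{\g{m}_j}(\g{v})$ for its normalizer in $\g{m}_j$, and produce $k\in K_j$ and $l\in\{1,\dots,r\}\setminus\{j\}$ with $\Ad(k)\g{t}\subset\g{q}_{j,l}\oplus\g{z}_j$, which will force $\Ad(k)\g{v}\in\mathcal{V}_l$. The reverse inclusion $\bigcup_l\Ad(K_j)\mathcal{V}_l\subset\mathcal{V}\setminus\{\g{n}_j^1\}$ is direct: $\mathcal{V}_l\subset\mathcal{V}$ by definition; $\mathcal{V}$ is $\Ad(K_j)$-invariant since the conditions of Proposition~\ref{prop:subtle} are; and $\g{n}_j^1$ is $\Ad(K_j)$-fixed, because $K_j\subset L_j$ centralizes $\g{a}_j=\R H^j$ and hence preserves the $H^j$-gradation, while $N_{\g{m}_j}(\g{n}_j^1)=\g{m}_j$ is not contained in any $\g{q}_{j,l}\oplus\g{z}_j$.

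For the nontrivial inclusion I would use the irreducibility hypothesis twice. First, $\g{t}$ must be proper in $\g{m}_j$: otherwise $\g{g}_j\cap\g{k}_j\subset\g{m}_j$ would normalize $\g{v}$, forcing $\g{v}\in\{0,\g{n}_j^1\}$, contrary to $2\leq\dim\g{v}<\dim\g{n}_j^1$. Second, the projection $\pi_1(\g{t})\subset\g{g}_j$ is proper: if $\pi_1(\g{t})=\g{g}_j$, then since $\g{z}_j$ is central in $\g{m}_j$ and $\g{g}_j$ is semisimple one obtains $\g{g}_j=[\g{g}_j,\g{g}_j]=[\pi_1(\g{t}),\pi_1(\g{t})]=[\g{t},\g{t}]\subset\g{t}$, so $\g{g}_j$ itself would normalize $\g{v}$, and irreducibility of the $\g{g}_j$-action on $\g{n}_j^1$ (inherited from the $\g{g}_j\cap\g{k}_j$-action) would again force $\g{v}\in\{0,\g{n}_j^1\}$.

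Condition (i) of Proposition~\ref{prop:subtle} says that the connected subgroup of $M_j$ with Lie algebra $\g{t}$ acts transitively on $B_j\cong G_j/(G_j\cap K_j)$; since the center $Z_j$ of $M_j$ is contained in $K_j$ and acts trivially on $B_j$, the connected subgroup of $G_j$ with Lie algebra $\pi_1(\g{t})$ is likewise transitive on $B_j$. Embedding $\pi_1(\g{t})$ in a maximal proper subalgebra $\g{s}$ of $\g{g}_j$, the subalgebra $\g{s}$ still acts transitively on $B_j$. The main obstacle is to show that $\g{s}$ must be parabolic: here I would invoke the classical classification of subgroups acting transitively on Riemannian symmetric spaces of noncompact type (Karpelevich--Mostow--Onishchik), which rules out proper reductive transitive subalgebras. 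The assumption that $\g{g}_j$ has no nonzero compact ideals ensures that $B_j$ is a genuine noncompact symmetric space to which this classification applies; hence $\g{s}$ is a maximal proper parabolic subalgebra of $\g{g}_j$, conjugate under some $k\in G_j\cap K_j\subset K_j$ to $\g{q}_{j,l}$ for some $l\neq j$.

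Applying this conjugation gives $\Ad(k)\pi_1(\g{t})\subset\g{q}_{j,l}$; since $\Ad(k)$ fixes $\g{z}_j$ pointwise (as $G_j$ centralizes $\g{z}_j$) and $\g{t}\subset\pi_1(\g{t})\oplus\g{z}_j$, we conclude $\Ad(k)\g{t}\subset\g{q}_{j,l}\oplus\g{z}_j$, so $\Ad(k)\g{v}\in\mathcal{V}_l$, as required. Lemma~\ref{lemma:splitting} is the structural input behind the projection step; alternatively one may extend $\g{t}$ to a maximal proper subalgebra $\tau$ of $\g{m}_j$, use the lemma to split $\tau=\pi_1(\tau)\oplus\pi_2(\tau)$, and analyze $\pi_1(\tau)\subset\g{g}_j$ exactly as above.
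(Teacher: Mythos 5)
Your proposal is correct and follows essentially the same route as the paper: irreducibility of the $\g{g}_j\cap\g{k}_j$-action to exclude the case $\g{g}_j\subset N_{\g{m}_j}(\g{v})$, the splitting of the normalizer via the projections $\pi_1,\pi_2$ (the content of Lemma~\ref{lemma:splitting}), the reductive-versus-parabolic dichotomy for maximal proper subalgebras combined with the nontransitivity of reductive subgroups on $B_j$ (the paper cites \cite[Proposition~3.1]{BT:crelle} where you cite Karpelevich--Mostow--Onishchik directly), and conjugation by $K_j$ into a standard parabolic $\g{q}_{j,l}$. The only differences are organizational — you work with $N_{\g{m}_j}(\g{v})$ and its image under $\pi_1$ directly instead of first embedding $N_{\g{m}_j}(\g{n}_{j,\g{v}})$ in a maximal proper subalgebra of $\g{m}_j$ — and do not affect the argument.
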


\begin{proof}
Let $\g{v}\in\mathcal{V}$. Then, thanks to Lemma~\ref{lemma:splitting}, either $N_{\g{m}_j}(\g{n}_{j,\g{v}})\supset\g{g}_j$ or $N_{\g{m}_j}(\g{n}_{j,\g{v}})$ is contained in $\tau=\hat{\tau}\oplus\g{z}_j$, for some maximal proper subalgebra $\hat{\tau}$ of $\g{g}_j$. In the first case we have that $N_{\g{k}_j}(\g{n}_{j,\g{v}})=N_{\g{m}_j}(\g{n}_{j,\g{v}})\cap\g{k}_j\supset\g{g}_j\cap\g{k}_j$, so condition (ii) in Proposition~\ref{prop:subtle} can only be satisfied if $\g{v}=\g{n}^1_j$, since the action of $\g{g}_j\cap\g{k}_j$ on $\g{n}^1_j$ is irreducible by assumption. Hence, let us assume that $N_{\g{m}_j}(\g{n}_{j,\g{v}})$ is contained in $\tau=\hat{\tau}\oplus\g{z}_j$, for some maximal proper subalgebra $\hat{\tau}$ of $\g{g}_j$. It is known that every maximal proper subalgebra of a semisimple real Lie algebra is either reductive or parabolic (see for example \cite[pp.~192--193]{Onishchik}). 

Let $\hat{\tau}$ be a reductive subalgebra of $\g{g}_j$. Since $N_{\g{m}_j}(\g{n}_{j,\g{v}})\subset\hat{\tau}\oplus\g{z}_j$ and $Z_j$ acts trivially on $B_j$, it turns out that $N_{M_j}(\g{n}_{j,\g{v}})$ cannot act transitively on $B_j$ according to the assumption that $\g{g}_j$ has no nonzero compact ideals and \cite[Proposition~3.1]{BT:crelle}. But this contradicts condition~(i) in Proposition~\ref{prop:subtle}.

Therefore, $N_{\g{m}_j}(\g{v})=\theta N_{\g{m}_j}(\g{n}_{j,\g{v}})$ is contained in $\Ad(k)\g{q}_{j,l}\oplus\g{z}_j$, for some $k\in G_j\cap K_j$, and where $\g{q}_{j,l}$ is a fixed maximal proper parabolic subalgebra of $\g{g}_j$.

Define $\tilde{\g{v}}=\Ad(k^{-1})\g{v}$. Then $N_{\g{m}_j}(\tilde{\g{v}})=\Ad(k^{-1})N_{\g{m}_j}(\g{v})\subset \g{q}_{j,l}\oplus\g{z}_j$. Moreover, $\Theta N_{M_j}^0(\tilde{\g{v}})=k^{-1}\Theta N_{M_j}^0(\g{v})k$ acts transitively on $B_j$, and $N_{K_j}(\tilde{\g{v}})=k^{-1}N_{K_j}(\g{v})k$ acts transitively on the unit sphere of $\tilde{\g{v}}$. Hence we get that $\tilde{\g{v}}\in \mathcal{V}_{l}$, and thus $\g{v}\in\Ad(k)\mathcal{V}_{l}$. One of the inclusions of the assertion in the lemma is then proved.

Now, let $\tilde{\g{v}}\in\mathcal{V}_{l}$ and $k\in K_j$. Since $K_j$ normalizes $M_j$, we have that $\Theta N_{M_j}(\Ad(k)\tilde{\g{v}})=k \Theta N_{M_j}(\tilde{\g{v}})k^{-1}$, and $N_{K_j}(\Ad(k)\tilde{\g{v}})=k N_{K_j}(\tilde{\g{v}})k^{-1}$, so $\Ad(k)\tilde{\g{v}}$ satisfies conditions (i) and (ii). Moreover, $\tilde{\g{v}}$ (or, equivalently, $\Ad(k)\tilde{\g{v}}$) cannot be $\g{n}^1_j$, because this would imply that $N_{\g{m}_j}(\tilde{\g{v}})=\g{m}_j$, contradicting the fact that $N_{\g{m}_j}(\tilde{\g{v}})\subset \g{q}_{j,l}\oplus \g{z}_j(\neq \g{m}_j)$ by definition of $\mathcal{V}_{l}$. Hence $\Ad(k)\tilde{\g{v}}\in\mathcal{V}\setminus\{\g{n}^1_j\}$.
\end{proof}

Proposition~\ref{prop:v} provides us with a more manageable method to determine all the subspaces $\g{v}$ of $\g{n}_j^1$ which give rise to cohomogeneity one actions via the nilpotent construction technique. Moreover, according to the last claim in Proposition~\ref{prop:subtle}, all subspaces $\g{v}\in\Ad(K_j)\mathcal{V}_{l}$ give rise to orbit equivalent actions. This means that the moduli space of cohomogeneity one actions up to orbit equivalence obtained by nilpotent construction from the choice $\Phi=\Phi_j$ can be identified with some subset of $\{\g{n}^1_j\}\cup\bigcup_{\begin{subarray}{c}l=1\\l\neq j\end{subarray}}^n \mathcal{V}_l$.

\section{The classifications in $G_2^\C/G_2$ and $SL_3(\C)/SU_3$}\label{sec:G2}

In this section we classify, up to orbit equivalence, the cohomogeneity one actions on the noncompact duals of the compact Lie groups $G_2$ and $SU_3$.

The symmetric space $M=G_2^\C/G_2$ has rank $2$ and dimension $14$. Its root system $\Sigma$ is of type $(G_2)$ and can be identified with the root system of the complex simple exceptional Lie algebra $\g{g}_2^\C$, so all root spaces have complex dimension $1$. Let $\Lambda=\{\alpha_1,\alpha_2\}$ be a set of simple roots, where $\alpha_1$ is the shortest simple root. Then $\Sigma^+=\{\alpha_1,\alpha_2,\alpha_1+\alpha_2, 2\alpha_1+\alpha_2, 3\alpha_1+\alpha_2, 3\alpha_1+2\alpha_2\}$. The maximal abelian subalgebra $\g{a}$ has dimension $2$ and is spanned by the root vectors $H_{\alpha_1}$ and $H_{\alpha_2}$. Moreover, $\g{k}_0=\R iH_{\alpha_1}\oplus\R i H^2=\R i H^1\oplus\R i H_{\alpha_2}\cong\g{u}_1\oplus\g{u}_1$, where $i$ is the complex structure of $\g{g}_2^\C$.

We can now state and prove the classification result for $G_2^\C/G_2$.

\begin{theorem}\label{th:G2}
Each cohomogeneity one action on $M=G_2^\C/G_2$ is orbit equivalent to one of the following cohomogeneity one actions on $M$:
\begin{enumerate} [{\rm (1)}]
\item The action of the subgroup $H_\ell$ of $G_2^\C$ with Lie algebra
\[
\g{h}_\ell=(\g{a}\ominus\ell)\oplus\g{n},
\]
where $\ell$ is a one-dimensional linear subspace of $\g{a}$. The orbits are isometrically congruent to each other and form a Riemannian foliation on $M$.
\item The action of the subgroup $H_{j}$, $j\in\{1,2\}$, of $G_2^\C$ with Lie algebra
\[
\g{h}_{j}=\g{a}\oplus(\g{n}\ominus\ell_j),
\]
where $\ell_j$ is a one-dimensional linear subspace of $\g{g}_{\alpha_j}$. The orbits form a Riemannian foliation on $M$ and there is exactly one minimal orbit.
\item The action of $SL_3(\C)\subset G_2^\C$, which has a totally geodesic singular orbit isometric to the symmetric space $SL_3(\C)/SU_3$.
\item The action of the subgroup $H_{j,0}^\Lambda$, $j\in\{1,2\}$, of $G_2^\C$ with Lie algebra
\[
\g{h}_{j,0}^\Lambda=\g{k}_{\alpha_{j+1}}\oplus\R i H_{\alpha_{j+1}}\oplus(\g{a}\ominus\R H_{\alpha_{j+1}})\oplus(\g{n}\ominus\g{g}_{\alpha_{j+1}}),
\]
where indices are taken modulo $2$ and $\g{k}_{\alpha_{j+1}}\oplus\R i H_{\alpha_{j+1}}=\g{g}_j\cap\g{k}_j\cong \g{so}_3$ is the Lie algebra of the isotropy group of the isometry group of the boundary component $B_j\cong \R H^3$. For each $j\in\{1,2\}$, the action has an $11$-dimensional minimal singular orbit and can be constructed by canonical extension of the cohomogeneity one action on the boundary component $B_j\cong\R H^3$ which has a single point as singular orbit.
\item The action of the subgroup $H_{j,1}^\Lambda$, $j\in\{1,2\}$, of $G_2^\C$ with Lie algebra
\[
\g{h}_{j,1}^\Lambda=\R i H_{\alpha_{j+1}}\oplus\g{a}\oplus(\g{n}\ominus\g{g}_{\alpha_{j+1}}),
\]
where indices are taken modulo $2$ and $\R i H_{\alpha_{j+1}}\cong \g{so}_2$ is contained in the Lie algebra of the isotropy group of the isometry group of the boundary component $B_j\cong \R H^3$. For each $j\in\{1,2\}$, the action has a $12$-dimensional minimal singular orbit and can be constructed by canonical extension of the cohomogeneity one action on the boundary component $B_j\cong\R H^3$ which has a geodesic as singular orbit.
\item The action of the subgroup $H_{1,\g{v}}$ of $G_2^\C$ with Lie algebra
\[
\g{h}_{1,\g{v}}=\g{g}_{-\alpha_2}\oplus\g{g}_0\oplus\g{g}_{\alpha_2}\oplus\g{g}_{2\alpha_1+\alpha_2}\oplus\g{g}_{3\alpha_1+\alpha_2}\oplus\g{g}_{3\alpha_1+2\alpha_2},
\]
where $\g{v}=\g{g}_{\alpha_1}\oplus\g{g}_{\alpha_1+\alpha_2}$. This action has a $10$-dimensional minimal singular orbit.
\end{enumerate}
\end{theorem}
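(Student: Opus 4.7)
The plan is to apply Theorem~\ref{th:BT} and to identify, among each of the five families it provides, precisely which actions appear on $M = G_2^\C/G_2$. Cases (1i), (1ii) and (2i) of that theorem will yield items (1)--(3) of Theorem~\ref{th:G2} essentially for free; the real work lies in case (2ii), namely canonical extension and nilpotent construction, with the latter being the main technical obstacle.

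For the foliation cases (1i) and (1ii), orbit equivalence is controlled by the symmetries of the Dynkin diagram of $\Sigma$. Since the diagram of type $(G_2)$ has no nontrivial automorphisms, every one-dimensional subspace $\ell \subset \g{a}$ produces a distinct foliation $H_\ell$, giving item (1), and the two actions $H_1, H_2$ are inequivalent, giving item (2). For case (2i), I would invoke the classification in \cite{BT:tohoku} of cohomogeneity one actions with totally geodesic singular orbit; for $G_2^\C/G_2$ this list reduces to the single action of $SL_3(\C) \subset G_2^\C$ with totally geodesic singular orbit $SL_3(\C)/SU_3$, giving item (3).

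For case (2ii(a)), I would first compute the two boundary components. Since $\Sigma_j = \{\pm\alpha_{3-j}\}$ for $j \in \{1, 2\}$ (no root of $(G_2)$ is a proper multiple of a simple root), each Levi $\g{g}_j$ is isomorphic to $\g{sl}_2(\C)$ as a real Lie algebra, so $B_1 \cong B_2 \cong \R H^3 \cong SL_2(\C)/SU_2$. The rank-one classification \cite{BT:tams} exhibits four cohomogeneity one actions on $\R H^3$: two foliations and two singular-orbit actions (a point and a geodesic). Canonical extension of a foliation yields a foliation on $M$, hence is covered by items (1)--(2); the canonical extensions of the two singular-orbit actions on each $B_j$ yield, together, the four algebras $\g{h}_{j,0}^\Lambda$ and $\g{h}_{j,1}^\Lambda$ of items (4)--(5). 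The absence of a Dynkin symmetry swapping $\alpha_1$ and $\alpha_2$ makes these four extensions mutually inequivalent, and the explicit formulas in the statement are recovered from the isotropy subalgebras of the two singular-orbit actions on $\R H^3$ together with the Langlands decomposition $\g{q}_j = \g{m}_j \oplus \g{a}_j \oplus \g{n}_j$.

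The core of the proof is the nilpotent construction (case (2ii(b))), which I would analyze via Proposition~\ref{prop:v}. A direct inspection of $\Sigma^+$ yields $\g{n}_1^1 = \g{g}_{\alpha_1} \oplus \g{g}_{\alpha_1+\alpha_2}$ (real dimension $4$) and $\g{n}_2^1 = \g{g}_{\alpha_2} \oplus \g{g}_{\alpha_1+\alpha_2} \oplus \g{g}_{2\alpha_1+\alpha_2} \oplus \g{g}_{3\alpha_1+\alpha_2}$ (real dimension $8$). In both cases $\g{g}_j \cap \g{k}_j \cong \g{su}_2$ acts on $\g{n}_j^1$ through an irreducible complex representation of $\g{sl}_2(\C)$ (the standard $\C^2$ for $j = 1$ and $\mathrm{Sym}^3 \C^2$ for $j = 2$), and in each case this representation is also irreducible as a real representation because it is quaternionic. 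Proposition~\ref{prop:v} therefore reduces the problem to (a) treating the distinguished subspace $\g{v} = \g{n}_j^1$ separately, and (b) classifying the $\Ad(K_j)$-orbits of subspaces in $\mathcal{V}_l$, where $\g{q}_{j,l}$ is the unique Borel subalgebra of $\g{g}_j \cong \g{sl}_2(\C)$. For $j = 1$, the smallness of $\g{n}_1^1$ should force the conclusion that the only $\g{v}$ satisfying both conditions of Proposition~\ref{prop:subtle} is $\g{v} = \g{n}_1^1$ itself, with $N^0_{L_1}(\g{n}_{1,\g{v}}) = L_1$, yielding item (6) once $\g{h}_{1,\g{v}}$ is written out explicitly. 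The main obstacle is $j = 2$: here $\g{n}_2^1$ is eight-dimensional and a priori admits many candidate subspaces, and I would need to exploit the $\g{su}_2$-equivariance together with the explicit structure of the Borel $\g{q}_{2,1} \subset \g{sl}_2(\C)$ to reduce $\mathcal{V}_1 \cup \{\g{n}_2^1\}$ to a short list of orbits and rule each out, either by showing that $\Theta N^0_{M_2}(\g{v})$ fails to act transitively on $B_2 \cong \R H^3$, or that $N^0_{K_2}(\g{v})$ fails to act transitively on the unit sphere of $\g{v}$.
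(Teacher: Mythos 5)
Your overall strategy -- run through the cases of Theorem~\ref{th:BT}, dispatch the foliations and the totally geodesic case by citation, compute the boundary components for the canonical extensions, and attack the nilpotent construction via Propositions~\ref{prop:subtle} and~\ref{prop:v} -- is the same as the paper's. But the nilpotent construction part, which is the heart of the proof, contains claims that are actually false and omits the computations that constitute most of the work. For $\Phi_1=\{\alpha_2\}$ you assert that ``the smallness of $\g{n}_1^1$ should force'' that only $\g{v}=\g{n}_1^1$ satisfies both conditions of Proposition~\ref{prop:subtle}. This is wrong: every \emph{complex} line in $\g{n}_1^1\cong\C^2$ also satisfies both conditions (its normalizer in $\g{g}_1\cong\g{sl}_2(\C)$ contains a Borel subalgebra, so $N^0_{M_1}(\g{n}_{1,\g{v}})$ is transitive on $B_1$, and a circle subgroup of $K_1^0\cong U_2$ is transitive on its unit sphere); these lines yield an action orbit equivalent to $H^\Lambda_{2,1}$ in item (5), so the final list is unaffected, but your intermediate claim cannot be the conclusion of a correct argument. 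What actually has to be proved is that the subspaces of \emph{nonzero} K\"ahler angle fail condition (i); the paper does this by normalizing $\g{v}=\mathrm{span}\{(1,0),(i\cos\varphi,i\sin\varphi)\}$ under $U_2$ and computing $(1-\theta)N_{\g{m}_1}(\g{n}_1^1\ominus\g{v})$ explicitly, finding it $2$-dimensional, so that $N_{M_1}(\g{n}_1^1\ominus\g{v})\cdot o$ cannot be all of $B_1\cong\R H^3$. Nothing in your sketch produces this. Similarly, for $\Phi_2=\{\alpha_1\}$ your plan is to ``rule each out'' by showing one of the two transitivity conditions fails; but $\g{v}=\g{g}_{3\alpha_1+\alpha_2}$ satisfies \emph{both} conditions (its normalizer is the full $\g{q}_{2,1}\oplus\g{z}_2$), and the correct conclusion is that it produces an action orbit equivalent to $H^\Lambda_{1,1}$, not that it fails. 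The reduction to this single subspace requires the explicit $4\times 4$ matrix computation of the $\ad$-action of $\g{m}_2$ on $\g{n}_2^1\cong\C^4$ and a case analysis on the coordinates of a generator of $\g{v}$, none of which is indicated.

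Two smaller points. First, $B_1$ and $B_2$ are both of constant negative curvature but are \emph{not} isometric to each other (the simple roots have different lengths), and the paper proves the mutual inequivalence of the actions in (4) (and in (5)) for $j=1,2$ by comparing the curvatures of the totally geodesic submanifolds tangent to the normal spaces of the singular orbits; your appeal to the absence of a Dynkin diagram symmetry is not a criterion that the cited results provide for canonical extensions (it applies to the foliation actions $H_\ell$ and $H_j$). Second, the paper does \emph{not} use Proposition~\ref{prop:v} for $j=1$; it argues directly via the constant K\"ahler angle forced by condition (ii). Your plan to use Proposition~\ref{prop:v} uniformly is admissible in principle, but you would then still have to determine $\mathcal{V}_2$ for $j=1$ by an explicit normalizer computation, so no work is saved.
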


\begin{proof}
We will consider the different cases in Theorem~\ref{th:BT}. If the orbits form a Riemannian foliation, we get the actions in (1) and (2). According to \cite{BT:tohoku} the action in (3) is the only one with a totally geodesic singular orbit.

Now let us determine the actions induced by canonical extension. The symmetric space $M$ has two maximal boundary components $B_1$ and $B_2$, both isometric to $\R H^3$ with certain constant curvature metrics, but not isometric to each other because of the different lengths of the simple roots. There are, up to orbit equivalence, exactly two cohomogeneity one actions on $\R H^3$ with a singular orbit, namely the action of the isotropy group $SO_3$ (producing a point as singular orbit, and geodesic spheres around it as principal orbits), and the action of the Lie group with Lie algebra $\R H_{\alpha_{j+1}}\oplus \g{so}_2$ on $B_j\cong\R H^3$, $i\in\{1,2\}$ (producing a geodesic as singular orbit, and tubes around it as principal orbits). The canonical extensions of these actions lead to the actions in (4) and~(5). Note that the two actions in (4) (similarly with (5)) are not orbit equivalent to each other. Indeed, the normal spaces to their singular orbits are Lie triple systems which give rise to totally geodesic submanifolds having different curvatures depending on whether $j=1$ or $j=2$.

The most difficult part of the proof will consist in analysing the case of actions induced by nilpotent construction. We have to consider the two possible choices of maximal proper subsystems of $\Lambda=\{\alpha_1,\alpha_2\}$, namely $\Phi_1=\{\alpha_2\}$ and $\Phi_2=\{\alpha_1\}$.

\medskip

\emph{Nilpotent construction with $\Phi_1=\{\alpha_2\}$}. In this case we have
\begin{align*}
\g{n}^1_1&=\g{g}_{\alpha_1}\oplus\g{g}_{\alpha_1+\alpha_2}\cong\C^2,
\\
\g{l}_1&=\g{g}_{-\alpha_2}\oplus\g{g}_0\oplus\g{g}_{\alpha_2}=\g{g}_1\oplus\g{z}_1\oplus\g{a}_1\cong\g{sl}_2(\C)\oplus\g{u}_1\oplus\R \cong\g{gl}_2(\C),
\\
\g{k}_1&=\g{k}_{\alpha_2}\oplus\g{k}_0=(\g{g}_1\cap\g{k}_1)\oplus\g{z}_1\cong\g{su}_2\oplus\g{u}_1\cong\g{u}_2.
\end{align*}
The adjoint action of the subalgebra $\g{g}_1\cong\g{sl}_2(\C)$ of $\g{l}_1$ on $\g{n}_1^1\cong \C^2$ is a nontrivial complex representation. Hence, it is equivalent to the irreducible representation of $\g{sl}_2(\C)$ on $\C^2$. 
The adjoint action of the subalgebra $\g{z}_1\cong\g{u}_1 = \R iH^1$ on $\g{n}_1^1 = \g{g}_{\alpha_1}\oplus\g{g}_{\alpha_1+\alpha_2} \cong \C \oplus \C$ is the standard one.
Moreover, the action of $\g{k}_1$ on $\g{n}^1_1$ is equivalent to the standard representation of $\g{u}_2$ on $\C^2$.

We show now that, given a linear subspace $\g{v}$ of $\g{n}^1_1$, the nilpotent construction in this setting produces a cohomogeneity one action (i.e.\ $\g{v}\in\mathcal{V}$ in the notation of Section~\ref{sec:nilpotent}) if and only if $\g{v}$ is a complex subspace of $\g{n}_1^1\cong\C^2$.

We show first that $\g{v}\in\mathcal{V}$ implies that $\g{v}$ is complex. Because of condition (ii), the subspace $\g{v}$ must have constant K\"ahler angle $\varphi\in[0,\pi/2]$ (see~\cite{BB:crelle}), which implies that its dimension is even, so we can assume that $\dim\g{v}=2$. Up to the action of an element of $K_1^0\cong U_2$, we can assume that
\[
\g{v}=\mathrm{span} \{(1,0),(i\cos(\varphi),i\sin(\varphi))\},
\]
where $i$ is the imaginary unit and coordinates are with respect to some $\C$-orthonormal basis $\{e_1,e_2\}$ with $e_1\in\g{g}_{\alpha_1}$ and $e_2\in\g{g}_{\alpha_1+\alpha_2}$. Then $\g{n}^1_1\ominus\g{v}=\mathrm{span}\{(0,1),(-i\sin(\varphi),i\cos(\varphi))\}$. Assuming that $\varphi\neq 0$, some elementary calculations show that
\[
N_{\g{m}_1}(\g{n}_1^1\ominus\g{v})\cong\left\{\begin{pmatrix}a-ib\cos(\varphi) & -ib\sin(\varphi)\\ -2a\cot(\varphi)+ic & -a+i b \cos(\varphi)\end{pmatrix}:a,b,c\in\R\right\}.
\]
Taking into account that the Cartan involution $\theta$ of $\g{g}$ restricts to the standard involution of $\g{g}_1\cong\g{sl}_2(\C)$ given by minus conjugate transpose of a matrix, one can calculate that the projection of $N_{\g{m}_1}(\g{n}_1^1\ominus\g{v})$ onto $\g{p}$ is
\[
(1-\theta)N_{\g{m}_1}(\g{n}_1^1\ominus\g{v})\cong\left\{\begin{pmatrix}a & -a \cot(\varphi) - i e\\ -a\cot(\varphi)+ie & -a\end{pmatrix}:a,e\in\R\right\}.
\]
But this means that the orbit of the action of $N_{M_1}(\g{n}_1^1\ominus\g{v})$ through the origin has dimension $2$, so condition (i) fails to be true, which gives the desired contradiction.

Finally, let $\g{v}$ be a complex subspace of $\C^2$. If $\dim_\C\g{v}=2$, then (i) and (ii) in Proposition~\ref{prop:subtle} are satisfied trivially, and we obtain the cohomogeneity one action of the group $H_{1,\g{v}}$ given in (6). Assume then that $\dim_\C\g{v}=1$. In this case there is a $U_1\subset U_2\cong K_1^0$ acting transitively on the unit sphere of $\g{v}$. Moreover, $N_{\g{g}_1}(\g{n}_1^1\ominus\g{v})$ is isomorphic to the Lie subalgebra of upper triangular matrices in $\g{sl}_2(\C)$, or equivalently, to some proper parabolic subalgebra of $\g{g}_1\cong \g{sl}_2(\C)$. Hence $N_{M_1}^0(\g{n}_1^1\ominus\g{v})$ acts transitively on $B_1\cong\R H^3$. Therefore, $\g{v}$ satisfies conditions (i) and (ii) in Proposition~\ref{prop:subtle}. In this case, there exists an element in $K_1^0\cong U_2$ that maps $\g{v}$ onto $\g{g}_{\alpha_1}$, and hence the corresponding action of cohomogeneity one is orbit equivalent to the action of the group $H^\Lambda_{2,1}$ described in (5).

\medskip

\emph{Nilpotent construction with $\Phi_2=\{\alpha_1\}$}. In this case we have
\begin{align*}
\g{n}^1_2&=\g{g}_{\alpha_2}\oplus\g{g}_{\alpha_1+\alpha_2}\oplus\g{g}_{2\alpha_1+\alpha_2}\oplus\g{g}_{3\alpha_1+\alpha_2}\cong\C^4,
\\
\g{l}_2&=\g{g}_{-\alpha_1}\oplus\g{g}_0\oplus\g{g}_{\alpha_1}=\g{g}_2\oplus\g{z}_2\oplus\g{a}_2\cong\g{sl}_2(\C)\oplus\g{u}_1\oplus\R \cong\g{gl}_2(\C),
\\
\g{k}_2&=\g{k}_{\alpha_1}\oplus\g{k}_0=(\g{g}_2\cap\g{k}_2)\oplus\g{z}_2\cong\g{su}_2\oplus\g{u}_1\cong\g{u}_2.
\end{align*}
Here and in the rest of the proof, $\g{g}_2$ refers to $[\g{l}_2,\g{l}_2]$ and not to the exceptional Lie algebra of $G_2$. Then, analogously as in the previous case, the subalgebra $\g{g}_2$ of $\g{l}_2$ is isomorphic to $\g{sl}_2(\C)$ and is given by the complex span of $\{X,\theta X, H_{\alpha_1}'\}$, where $X$ is some nonzero vector in $\g{g}_{\alpha_1}$ and $H_{\alpha_1}'=\frac{2}{\langle\alpha_1,\alpha_1\rangle}H_{\alpha_1}$.
Since we have a root system of type $(G_2)$, it is easy to check that the eigenvalues of $\ad(H_{\alpha_1}')\rvert_{\g{n}^1_2}$ are $-3,-1,1$ and $3$. This means that the adjoint action of $\g{g}_2$ on $\g{n}^1_2$ is equivalent to the irreducible complex representation of $\g{sl}_2(\C)$ on $\C^4$.
Since it is of quaternionic type (it follows for example from~\cite[p.~244]{Simon}), then it is irreducible as real representation as well.
On the other hand, the action of $\g{z}_2 = \R iH^2\cong\g{u}_1$ on each root space in $\g{n}^1_2$ is the standard one; recall that $H^2\in\g{a}$ is determined by $\alpha_1(H^2)=0$ and $\alpha_2(H^2)=1$. Hence, we get that the adjoint action of $\g{k}_2$ on $\g{n}^1_2$ is equivalent to the irreducible representation $\rho_3\otimes\sigma$ of $\g{u}_2=\g{su}_2\oplus\g{u}_1$, where $\rho_3$ denotes the irreducible representation of $\g{su}_2$ on $\C^4$ and $\sigma$ is the standard action of $\g{u}_1$ on~$\C$.

Now we have to determine the subspaces $\g{v}$ of $\g{n}^1_2$ that produce cohomogeneity one actions by means of the nilpotent construction; in terms of the notation in Section~\ref{sec:nilpotent}, we have to determine $\mathcal{V}$. Since $U_2$ does not act transitively on the unit sphere of $\g{n}^1_2$, we have that $\g{n}^1_2\notin \mathcal{V}$. Proposition~\ref{prop:v} then guarantees that $\mathcal{V}=\Ad(K_2)\mathcal{V}_1$. Recall that $\mathcal{V}_1$ is the subset of subspaces $\g{v}$ in $\mathcal{V}$ such that $N_{\g{m}_2}(\g{v})$ is contained in $\g{q}_{2,1}\oplus\g{z}_2$, where we will take $\g{q}_{2,1}$ as the parabolic subalgebra $\C H_{\alpha_1}'\oplus\C X$ of $\g{g}_2\cong \g{sl}_2(\C)$. Our aim now is to determine $\mathcal{V}_1$.

Let $\g{v}\in\mathcal{V}_1$. Then $N_{\g{k}_2}(\g{v})\subset (\g{q}_{2,1}\cap\g{k}_2)\oplus\g{z}_2=\g{k}_0\cong \g{u}_1\oplus\g{u}_1$. This implies that $\dim\g{v}=2$, since $U_1\times U_1$ cannot act transitively on any sphere of dimension greater than one. Now take $i H_{\alpha_1}'$ and $iH^2$ as generators of $\g{k}_0\ominus\g{z}_2$ and $\g{z}_2$, respectively.
Then, a generic element $i(r H_{\alpha_1}' +s H^2) \in \g{k}_0$, $r,s\in\R$, acts via the adjoint representation on $\g{n}^1_2\cong \C^4$ by means of the $4\times 4$ diagonal complex matrix whose nonzero entries are $i(s-3r)$, $i(s-r)$, $i(s+r)$ and $i(s+3r)$.

Fix any nonzero vector $v\in \g{v}$. Then $\g{v}=\R v\oplus\R (r_0 \ad(iH_{\alpha_1}') +s_0 \ad(iH^2))v$, for some $r_0$, $s_0\in \R$. Let $\{e_0,e_1,e_2,e_3\}$ be a $\C$-orthonormal basis of $\g{n}^1_2\cong\C^4$, where $e_0\in\g{g}_{\alpha_2}$ and $e_j=\ad(X)^{j}e_1\in\g{g}_{\alpha_2+j\alpha_1}$ for $j=1,2,3$. Set $v=\sum_{j=0}^3 z_j e_j$, for $z_j=x_j+iy_j\in\C$.
We will show that $z_0=z_1=z_2=0$. For this purpose, let us assume that at least two coordinates of $v$ are nonzero, and we will get a contradiction.

If at least two coordinates of $v$ are nonzero, one can easily show that the normalizer $N_{\g{k}_2}(\g{v})$ must be one dimensional and, indeed, it is generated by $i(r_0 H_{\alpha_1}' + s_0 H^2)$. Moreover, for some multiple $\tilde{H}$ of $r_0 H_{\alpha_1}' +s_0 H^2$, it must happen that $\ad(i\tilde{H})$ acts diagonally on $\g{n}_2^1$ (with respect to the aforementioned basis) with coefficients $\pm i$. Let us set $\ad(i\tilde{H})e_j=i\varepsilon_j e_j$, where $\varepsilon_j\in\{\pm 1\}$, $j\in\{0,1,2,3\}$. In particular, $\g{v}=\R v \oplus \R v'$, where $v'=\sum_{j=0}^3 \varepsilon_j iz_j e_j$.

On the other hand, up to a rescaling of $X$, the adjoint action of a generic element
\[
(a+ib)H'_{\alpha_1} +(c+id)X-(e+if)\theta X + is H^2 \in \g{m}_2=\g{g}_2\oplus\g{z}_2
\]
on $\g{n}^1_2\cong\C^4$ adopts the matrix form
\[
\begin{pmatrix}-3(a+ib)+is & 3(e+if) &0 &0\\ c+id & -(a+ib)+is & 4(e+if) & 0 \\ 0 & c+id & a +ib+is & 3(e+if) \\ 0 & 0& c+id & 3(a +ib)+is\end{pmatrix}.
\]
We have to determine the real parameters $a,b,c,d,s$ such that
\[
w=\ad((a+ib)H'_{\alpha_1}+(c+id)X+is H^2)v 
 \]
belongs to $\g{v}$, i.e.\ $w=\lambda v + \mu v'$ for real numbers $\lambda$ and $\mu$. Let $k$ be the first integer for which $z_k\neq 0$. Then the condition $w=\lambda v + \mu v'$ implies that $\lambda=(2k-3)a$ and $\mu=\varepsilon_k((2k-3)b+s)$. Let $l$ be the smallest integer $l\in\{0,\dots,3\}$, $l>k$, such that $z_l\neq 0$. If $l>k+1$, then we similarly obtain that $\lambda=(2l-3)a$ and $\mu=\varepsilon_l((2l-3)b+s)$, which implies $a=0$. If $l=k+1$, then one can express $c$ and $d$ in terms of $a$ and $b((2k-3)\varepsilon_l-(2l-3)\varepsilon_k)+s(\varepsilon_l-\varepsilon_k)$. In any case, we deduce that the projection of $N_{\g{m}_2}(\g{v})=N_{\g{q}_{2,1}\oplus\g{z}_2}(\g{v})$ onto $\g{p}$ cannot have dimension~$3$, which means that condition (i) of the nilpotent construction cannot be satisfied (since $\dim B_2=\dim \R H^3=3$), thus contradicting the hypothesis $\g{v}\in\mathcal{V}_1$. Hence, we must have $\g{v}=\C e_j$, for some $j\in\{0,\dots,3\}$. However, if $j\neq 3$, the requirement $\ad((a+ib)H'_{\alpha_1}+(c+id)X+is H^2)e_j\subset \C e_j$ implies that $c=d=0$, which again leads to a contradiction with condition (i).

Therefore, we have  $\g{v}=\C e_3$, in which case $N_{\g{q}_{2,1}\oplus\g{z}_2}(\g{v})=\g{q}_{2,1}\oplus\g{z}_2$. Altogether, we deduce that the nilpotent construction method only produces a cohomogeneity one action for the choice $\g{v}=\C e_3=\g{g}_{3\alpha_1+\alpha_2}\in\mathcal{V}_1$. But since there is an element in $K_2\cong U_2$ mapping $\g{g}_{3\alpha_1+\alpha_2}$ onto $\g{g}_{\alpha_2}$, it turns out that the cohomogeneity one action of the group $H_{2,\g{g}_{3\alpha_1+\alpha_2}}$ obtained by nilpotent construction is orbit equivalent to the action of the group $H^\Lambda_{1,1}$ described in~(5). This concludes the proof.
\end{proof}

The same arguments employed above (in particular, the ones corresponding to the case $\Phi_1=\{\alpha_2\}$) can be used to obtain the classification of cohomogeneity one actions on the symmetric space $M = SL_3(\C)/SU_3$, which has rank $2$ and dimension $8$. Its root system $\Sigma$ is of type $(A_2)$ and can be identified with the root system of the complex simple Lie algebra $\g{sl}_3(\C)$. All root spaces have complex dimension $1$. Let $\Lambda=\{\alpha_1,\alpha_2\}$ be a set of simple roots, so that $\Sigma^+=\{\alpha_1,\alpha_2,\alpha_1+\alpha_2\}$. The maximal abelian subalgebra $\g{a}$ has dimension~$2$ and is spanned by the root vectors $H_{\alpha_1}$ and $H_{\alpha_2}$. Moreover, $\g{k}_0\cong\g{u}_1\oplus\g{u}_1$. Taking into account the Dynkin diagram symmetry for the root system $(A_2)$, this leads to the following classification:

\begin{theorem}\label{th:SU3}
Each cohomogeneity one action on $M=SL_3(\C)/SU_3$ is orbit equivalent to one of the following cohomogeneity one actions on $M$:
\begin{enumerate} [{\rm (1)}]
\item The action of the subgroup $H_\ell$ of $SL_3(\C)$ with Lie algebra
\[
\g{h}_\ell=(\g{a}\ominus\ell)\oplus\g{n},
\]
where $\ell$ is a one-dimensional linear subspace of $\g{a}$. The orbits are isometrically congruent to each other and form a Riemannian foliation on $M$.
\item The action of the subgroup $H_1$ of $SL_3(\C)$ with Lie algebra
\[
\g{h}_1=\g{a}\oplus(\g{n}\ominus\ell),
\]
where $\ell$ is a one-dimensional linear subspace of $\g{g}_{\alpha_1}$. The orbits form a Riemannian foliation on $M$ and there is exactly one minimal orbit.
\item The action of $SL_2(\C) \times \R \subset SL_3(\C)$, which has a totally geodesic singular orbit isometric to the symmetric space $SL_2(\C)/SU_2 \times \R \cong \R H^3 \times \R$.
\item The action of $SL_3(\R)  \subset SL_3(\C)$, which has a totally geodesic singular orbit isometric to the symmetric space $SL_3(\R)/SO_3$.
\item The action of the subgroup $H_{1,0}^\Lambda$ of $SL_3(\C)$ with Lie algebra
\[
\g{h}_{1,0}^\Lambda=\g{k}_{\alpha_2}\oplus\R i H_{\alpha_2}\oplus(\g{a}\ominus\R H_{\alpha_2})\oplus \g{g}_{\alpha_1} \oplus \g{g}_{\alpha_1 + \alpha_2},
\]
where  $\g{k}_{\alpha_2}\oplus\R i H_{\alpha_2}=\g{g}_1\cap\g{k}_1\cong \g{so}_3$ is the Lie algebra of the isotropy group of the isometry group of the boundary component $B_1\cong \R H^3$. The action has a $5$-dimensional minimal singular orbit and can be constructed by canonical extension of the cohomogeneity one action on the boundary component $B_1\cong\R H^3$ which has a single point as singular orbit.
\item The action of the subgroup $H_{1,1}^\Lambda$ of $SL_3(\C)$ with Lie algebra
\[
\g{h}_{1,1}^\Lambda=\R i H_{\alpha_2}\oplus\g{a}\oplus \g{g}_{\alpha_1} \oplus \g{g}_{\alpha_1 + \alpha_2},
\]
where $\R i H_{\alpha_2}\cong \g{so}_2$ is contained in the Lie algebra of the isotropy group of the isometry group of the boundary component $B_1\cong \R H^3$. The action has a $6$-dimensional minimal singular orbit and can be constructed by canonical extension of the cohomogeneity one action on the boundary component $B_1\cong\R H^3$ which has a geodesic as singular orbit.
\end{enumerate}
\end{theorem}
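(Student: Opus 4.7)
The plan is to walk through the five mutually exhaustive cases of Theorem~\ref{th:BT} applied to $M = SL_3(\C)/SU_3$ and show that, up to orbit equivalence, each case either collapses under the Dynkin symmetry of the $A_2$ root system or reproduces one of the actions listed in (1)--(6). A general preliminary observation is that the Dynkin diagram of $(A_2)$ admits the nontrivial symmetry $\alpha_1 \leftrightarrow \alpha_2$, which will systematically cut the number of distinct classes in each family roughly in half compared to the $G_2^\C/G_2$ analysis.

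I would first dispose of the easy parts. Cases (1)(i) and (1)(ii) of Theorem~\ref{th:BT} give the actions in (1) and (2) directly, where the Dynkin swap identifies the two choices of simple root in (1)(ii). For the totally geodesic singular orbit case I would invoke the classification of \cite{BT:tohoku}: the maximal proper reductive subgroups of $SL_3(\C)$ acting with cohomogeneity one on $M$ are, up to orbit equivalence, the real form $SL_3(\R)$ (with orbit $SL_3(\R)/SO_3$) and $SL_2(\C)\times\R$ (with orbit $\R H^3 \times \R$, coinciding with the Levi-factor orbit $L_j \cdot o$ of one of the parabolics), yielding (3) and (4). For canonical extension I would use that the two maximal boundary components $B_1, B_2$ are each isometric to $\R H^3$ (and interchanged by the Dynkin symmetry, since all simple roots have equal length in $(A_2)$), and that up to orbit equivalence $\R H^3$ admits exactly two cohomogeneity one actions with a singular orbit (a point and a geodesic). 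The canonical extensions of these two actions, after identifying $j=1$ and $j=2$ via the Dynkin symmetry, produce precisely (5) and (6).

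The substance of the proof lies in the nilpotent construction. By the Dynkin symmetry it suffices to treat $\Phi_1 = \{\alpha_2\}$, where the algebraic data
\[
\g{n}^1_1 = \g{g}_{\alpha_1}\oplus\g{g}_{\alpha_1+\alpha_2} \cong \C^2, \quad \g{l}_1 \cong \g{gl}_2(\C), \quad \g{k}_1 \cong \g{u}_2,
\]
matches verbatim the $\Phi_1$-case in the proof of Theorem~\ref{th:G2}. I would import the constant-K\"ahler-angle argument from that proof to conclude that any $\g{v} \in \mathcal{V}$ must be a complex subspace of $\C^2$, with the non-complex possibilities eliminated by the explicit calculation of the projection of $N_{\g{m}_1}(\g{n}^1_1\ominus\g{v})$ onto $\g{p}$. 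The crucial \emph{new} observation, specific to the $(A_2)$ setting, is that $\g{n}_1 = \g{n}^1_1$ with no higher grades $\g{n}^\nu_1$, $\nu \geq 2$. Consequently, if $\dim_\C \g{v} = 2$ then $\g{n}_{1,\g{v}} = 0$, so that $H_{1,\g{v}} = L_1^0$ whose orbit through $o$ is $L_1 \cdot o \cong \R H^3 \times \R$ of dimension $4$, not of codimension~$1$ in the $8$-dimensional $M$. This rules out the analogue of the genuinely new $G_2^\C/G_2$ action. Hence $\dim_\C \g{v} = 1$, and by transitivity of $U_2 \cong K_1^0$ on complex lines in $\C^2$, the subspace $\g{v}$ is $\Ad(K_1^0)$-conjugate to $\g{g}_{\alpha_1}$, producing an action orbit-equivalent to the canonical extension $H^\Lambda_{1,1}$ in~(6).

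The main obstacle is the non-complex subspace analysis in the nilpotent construction step, but since it is formally identical to the calculation already carried out in the proof of Theorem~\ref{th:G2}, I would simply quote that computation. The real conceptual point to verify is the dimensional collapse in the $\dim_\C \g{v} = 2$ case, which is what guarantees that the nilpotent construction yields nothing new for $SL_3(\C)/SU_3$ beyond what canonical extension already produces.
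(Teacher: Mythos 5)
Your overall route is the same as the paper's: the paper deduces Theorem~\ref{th:SU3} by observing that the arguments for $G_2^{\C}/G_2$ --- in particular the $\Phi_1=\{\alpha_2\}$ case of the nilpotent construction, where the data $\g{n}^1_1\cong\C^2$, $\g{l}_1\cong\g{gl}_2(\C)$, $\g{k}_1\cong\g{u}_2$ are identical --- carry over, and then invoking the $(A_2)$ Dynkin symmetry to halve the count. Your handling of the foliation, totally geodesic, canonical extension and K\"ahler-angle steps matches this.

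However, the step you flag as ``the real conceptual point'' is wrong as stated. For $\g{v}=\g{n}^1_1$ (i.e.\ $\dim_\C\g{v}=2$) the nilpotent construction does \emph{not} fail to produce a cohomogeneity one action: conditions (i) and (ii) of Proposition~\ref{prop:subtle} hold trivially, exactly as in the $G_2^{\C}/G_2$ proof ($N^0_{M_1}(0)=M_1$ acts transitively on $B_1$, and $K_1^0\cong U_2$ acts transitively on the unit sphere $S^3$ of $\g{v}\cong\C^2$), so $H_{1,\g{v}}=L_1^0$ does act with cohomogeneity one. Your inference ``the orbit through $o$ has dimension $4$, hence not codimension $1$, hence ruled out'' conflates the singular orbit with a principal orbit: the singular orbit of a nilpotent-construction action always has codimension $\dim\g{v}$ (it is likewise $4$ in the $G_2^{\C}/G_2$ case, where the singular orbit is $10$-dimensional in a $14$-dimensional space), and here the slice representation of $K_1\cong U_2$ on $\g{v}\cong\C^2$ is transitive on $S^3$, so the tubes around the $4$-dimensional orbit are hypersurfaces. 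The correct reason this case adds nothing to the list is that, since $\g{n}_1=\g{n}^1_1$ in type $(A_2)$, the group degenerates to $L_1^0$, whose singular orbit $B_1\times(A_1\cdot o)\cong\R H^3\times\R$ is \emph{totally geodesic}; the action is therefore orbit equivalent to the reductive action (3), already accounted for via Theorem~\ref{th:BT}(2)(i) and \cite{BT:tohoku}. Your final list is unaffected, but the justification for the case $\dim_\C\g{v}=2$ must be replaced by this orbit-equivalence observation.
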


\section{The classification in the noncompact real two-plane Grassmannians}\label{sec:Grassmannian}

In this section we classify, up to orbit equivalence, the cohomogeneity one actions on the noncompact real two-plane Grassmann manifolds $SO^0_{2,n+2}/SO_2SO_{n+2}$, $n\geq 1$

The symmetric space $M=SO^0_{2,n+2}/SO_2SO_{n+2}$ has rank $2$ and dimension $2n+4$. Its root system $\Sigma$ is of type $(B_2)$. Let $\Lambda=\{\alpha_1,\alpha_2\}$, where $\alpha_1$ is the longest simple root. Then $\Sigma^+=\{\alpha_1,\alpha_2,\alpha_1+\alpha_2,\alpha_1+2\alpha_2\}$, where the multiplicities of the two long roots $\alpha_1$ and $\alpha_1+2\alpha_2$ are $1$, and those of the two short roots $\alpha_2$ and $\alpha_1+\alpha_2$ are $n$. The maximal abelian subalgebra $\g{a}$ has dimension $2$ and is spanned by the root vectors $H_{\alpha_1}$ and $H_{\alpha_2}$. Moreover, $\g{k}_0\cong \g{so}_n$ acts by the standard representation on the root spaces of dimension $n$, and trivially on those of dimension one.

\begin{theorem}\label{th:Grassmannian}
Every cohomogeneity one action on $M=SO^0_{2,n+2}/SO_2SO_{n+2}$, $n \geq 1$, is orbit equivalent to one of the following cohomogeneity one actions on $M$:
\begin{enumerate} [{\rm (1)}]
\item The action of the subgroup $H_\ell$ of $SO^0_{2,n+2}$ with Lie algebra
\[
\g{h}_\ell=(\g{a}\ominus\ell)\oplus\g{n},
\]
where $\ell$ is a one-dimensional linear subspace of $\g{a}$. The orbits are isometrically congruent to each other and form a Riemannian foliation on $M$.
\item The action of the subgroup $H_{j}$, $j\in{1,2}$, of $SO^0_{2,n+2}$ with Lie algebra
\[
\g{h}_{j}=\g{a}\oplus(\g{n}\ominus\ell_j),
\]
where $\ell_j$ is a one-dimensional linear subspace of $\g{g}_{\alpha_j}$. The orbits form a Riemannian foliation on $M$ and there is exactly one minimal orbit.
\item The action of $SO^0_{1,n+2}\subset SO^0_{2,n+2}$, which has a totally geodesic singular orbit isometric to a real hyperbolic space $\R H^{n+2}$.
\item The action of $SO^0_{2,n+1}\subset SO^0_{2,n+2}$, which has a totally geodesic singular orbit isometric to the real Grassmannian $SO^0_{2,n+1}/SO_2SO_{n+1}$.
\item If $n=2k$ is even, the action of $SU_{1,k+1}\subset SO^0_{2,2k+2}$, which has a totally geodesic singular orbit isometric to a complex hyperbolic space $\C H^{k+1}$.
\item The action of the subgroup $H_{1,k}^\Lambda$, $k\in\{0,\dots,n-1\}$, of $SO^0_{2,n+2}$ with Lie algebra
\[
\g{h}_{1,k}^\Lambda=N_{\g{k}_1}(\g{w})\oplus(\g{a}\ominus\R H_{\alpha_{2}})\oplus(\g{n}\ominus\g{g}_{\alpha_{2}})\oplus \g{w},
\]
where $\g{w}$ is a $k$-dimensional subspace of $\R H_{\alpha_2}\oplus\g{g}_{\alpha_2}$ containing $H_{\alpha_2}$ if $k\geq 1$, and $N_{\g{k}_1}(\g{w})\cong \g{so}_{n-k+1}\oplus \g{so}_k$ is the normalizer of $\g{w}$ in the Lie algebra $\g{k}_1=\g{k}_0\oplus\g{k}_{\alpha_2}\cong\g{so}_{n+1}$ of the isotropy group of the isometry group of the boundary component $B_1\cong \R H^{n+1}$. The action has a minimal singular orbit of codimension $n-k+1$ and can be constructed by canonical extension of the cohomogeneity one action on the boundary component $B_1\cong\R H^{n+1}$ which has a totally geodesic $\R H^k$ as a singular orbit.
\item The action of the subgroup $H_{2}^\Lambda$ of $SO^0_{2,n+2}$ with Lie algebra
\[
\g{h}_{2}^\Lambda=\g{k}_{\alpha_{1}}\oplus(\g{a}\ominus\R H_{\alpha_{1}})\oplus(\g{n}\ominus\g{g}_{\alpha_{1}}),
\]
where $\g{k}_{\alpha_{1}}\cong \g{so}_2$ is the Lie algebra of the isotropy group of the isometry group of the boundary component $B_2\cong \R H^2$. The action has a minimal singular orbit of codimension two and can be constructed by canonical extension of the cohomogeneity one action on the boundary component $B_2\cong\R H^2$ which has a single point as orbit.
\end{enumerate}
\end{theorem}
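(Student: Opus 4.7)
My plan is to apply Theorem~\ref{th:BT} case by case and verify that each of its five types of actions yields exactly the families (1)--(7). The foliation cases (1)(i) and (1)(ii) immediately produce items (1) and (2); since the simple roots $\alpha_1,\alpha_2$ have different lengths and multiplicities, the Dynkin diagram of $(B_2)$ has no nontrivial symmetry, so the two choices $j=1,2$ in (2) yield inequivalent orbit equivalence classes. For the totally geodesic case (2)(i), I would invoke the classification of~\cite{BT:tohoku} applied to symmetric spaces with root system $(B_2)$ and multiplicities $(1,n,1,n)$; this produces exactly the reductive subgroups $SO^0_{1,n+2}$, $SO^0_{2,n+1}$, and (only when $n=2k$) $SU_{1,k+1}$, giving items (3)--(5).

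For the canonical extension case (2)(ii)(a), I would identify the two boundary components from the simple-root multiplicities: $B_1\cong\R H^{n+1}$ (from $\Phi_1=\{\alpha_2\}$, with $\alpha_2$ of multiplicity $n$) and $B_2\cong\R H^2$ (from $\Phi_2=\{\alpha_1\}$, with $\alpha_1$ of multiplicity $1$). Using the classical classification of cohomogeneity one actions on real hyperbolic spaces---whose singular orbits are either a point or a totally geodesic $\R H^k$---canonical extensions from $B_1$ produce the family (6) for $k\in\{0,\dots,n-1\}$, and the point-singular-orbit action on $B_2$ extends to (7). I would separately verify that the geodesic-singular-orbit action on $B_2$ canonically extends to something orbit equivalent to an action already listed (most naturally to (3) or to a member of (6)), and that the case $k=n$ for $B_1$ is likewise absorbed into a foliation or reductive action already enumerated.

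The bulk of the work, and the step I expect to be the real obstacle, is ruling out new examples from the nilpotent construction (2)(ii)(b), which must be carried out for both choices $\Phi_1=\{\alpha_2\}$ and $\Phi_2=\{\alpha_1\}$. I would first compute $\g{n}^1_1=\g{g}_{\alpha_1}\oplus\g{g}_{\alpha_1+\alpha_2}\oplus\g{g}_{\alpha_1+2\alpha_2}$, of dimension $n+2$, and $\g{n}^1_2=\g{g}_{\alpha_2}\oplus\g{g}_{\alpha_1+\alpha_2}$, of dimension $2n$, describe the Chevalley decomposition of $\g{l}_j$ and the adjoint representation of $\g{k}_j$ on $\g{n}^1_j$, and then apply Propositions~\ref{prop:subtle} and~\ref{prop:v} to enumerate admissible subspaces $\g{v}$. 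For $\Phi_1$ the analysis should follow the pattern of the $\Phi_1$ case in the proof of Theorem~\ref{th:G2}: the sphere-transitivity condition (ii) restricts $\g{v}$ to a short list of candidates, each of which, after $K_1$-conjugation, should correspond to a subspace already appearing in (6). For $\Phi_2$, the decomposition $\g{n}^1_2\cong(\R^n)^{\oplus 2}$ as an $\g{so}_n$-module, combined with Proposition~\ref{prop:v}, should force $\g{v}$ to be (up to $K_2$-equivalence) a single root space, once again giving only canonical extensions.

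The genuine difficulty will be verifying transitivity condition (i) of Proposition~\ref{prop:subtle}: for each candidate $\g{v}$ one must show that the $(1-\theta)$-projection of $N_{\g{m}_j}(\g{v})$ onto $\g{p}$ has dimension equal to $\dim B_j$ (namely $n+1$ for $j=1$ and $2$ for $j=2$). As in the proof of Theorem~\ref{th:G2}, I expect this to require an explicit matrix calculation of $N_{\g{m}_j}(\g{v})$ and of its projection, with the aim of showing that the only subspaces satisfying both (i) and (ii) are, up to $K_j$-conjugacy, the ones giving rise to actions already present in items (6) and (7). Once this is established, Theorem~\ref{th:BT} guarantees that the list (1)--(7) is complete.
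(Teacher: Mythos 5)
Your overall strategy---running through the cases of Theorem~\ref{th:BT}---is the same as the paper's, and your treatment of the foliations, the totally geodesic orbits, and the canonical extensions matches the actual proof (modulo the small point that on $B_2\cong\R H^2$ a geodesic is a hypersurface, hence a regular orbit, so there is no ``geodesic-singular-orbit action'' on $B_2$ to extend; only the point gives a singular orbit, which is why (7) is a single action). The genuine gap is in the nilpotent construction, which you correctly identify as the crux but do not carry out, and where both of your predicted outcomes are wrong. For $\Phi_1=\{\alpha_2\}$ there are no admissible subspaces at all: condition (ii) of Proposition~\ref{prop:subtle} forces $\g{v}$ into the $\g{so}_{n+1}$-module $\xi^\perp\subset\g{n}^1_1\cong\R^{n+2}$ (the complement of the $\g{k}_1$-fixed line), and then for $\dim\g{v}=k$ one gets $N_{\g{m}_1}(\g{v})\cong\g{so}_{1,n-k+1}\oplus\g{so}_k$, which cannot act transitively on $B_1\cong\R H^{n+1}$; so condition (i) always fails and nothing here reduces to (6). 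Note also that Proposition~\ref{prop:v} is not applicable to this case, since the representation of $\g{g}_1\cap\g{k}_1$ on $\g{n}^1_1$ is reducible; the paper uses a direct elementary argument instead.

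For $\Phi_2=\{\alpha_1\}$ the admissible subspaces are not just single root spaces. Writing $\g{n}^1_2\cong\R^2\otimes\R^n$ for the $\g{sl}_2(\R)\oplus\g{so}_n$-action, the paper shows that, up to $K_2$-conjugacy, $\g{v}$ is either an arbitrary subspace of $\g{g}_{\alpha_2}\cong e_1\otimes\R^n$ (these give actions orbit equivalent to those in (6) with $k\geq 1$), or the two-dimensional subspace $\R(e_1\otimes f_1)\oplus\R(e_2\otimes f_1)=\R^2\otimes f_1$. This last case is the one your sketch would miss: it satisfies both conditions of Proposition~\ref{prop:subtle} and produces a cohomogeneity one action with a codimension-two singular orbit, which turns out to be orbit equivalent to the \emph{totally geodesic} action (4), not to a canonical extension. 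Since the whole point of this step is to certify that no action outside the list (1)--(7) arises, an enumeration of the admissible $\g{v}$ that omits $\R^2\otimes f_1$, or that asserts that $\Phi_1$ contributes examples reducing to (6), leaves the proof incomplete; the required dichotomy has to be established by the explicit normalizer and projection computations that the paper performs (the key technical device being the preliminary claim that any $v=v_1+v_2\in\g{v}$ with $[T,v_1]\perp v_2$ must have $v_1=0$ or $v_2=0$).
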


\begin{proof}
We consider the different cases in Theorem~\ref{th:BT}. If the orbits form a Riemannian foliation, we get the actions in (1) and (2). According to \cite{BT:tohoku}, the actions in (3), (4) and (5) are precisely those with a totally geodesic singular orbit.

Now we determine the actions induced by canonical extension. The symmetric space $M$ has two maximal boundary components $B_1\cong \R H^{n+1}$ and $B_2\cong \R H^2$. They have different curvatures because of the different lengths of the simple roots. The well-known classification of cohomogeneity one actions on real hyperbolic spaces gives then rise to the actions described in (6) and (7) via canonical extension. None of the actions in (6) is orbit equivalent to (7) due to the different constant curvature of $B_1$ and $B_2$. Moreover, none of the actions in (6) or (7) is orbit equivalent to any action in (1)-(5), because the singular orbits in (6) and (7) are minimal but not totally geodesic.

We proceed now with the investigation of the nilpotent construction method.

\medskip

\emph{Nilpotent construction with $\Phi_1=\{\alpha_2\}$}. In this case we have
\begin{align*}
\g{n}_1=\g{n}^1_1&=\g{g}_{\alpha_1}\oplus\g{g}_{\alpha_1+\alpha_2}\oplus\g{g}_{\alpha_1+2\alpha_2}\cong\R^{n+2},
\\
\g{l}_1&=\g{g}_{-\alpha_2}\oplus\g{g}_0\oplus\g{g}_{\alpha_2}=\g{g}_1\oplus\g{a}_1\cong\g{so}_{1,n+1}\oplus\R,
\\
\g{k}_1&=\g{k}_{\alpha_2}\oplus\g{k}_0=\g{so}_{n+1}.
\end{align*}
The action of $\g{m}_1=\g{g}_1$ on $\g{n}_1$ is equivalent to the standard representation of $\g{so}_{1,n+1}$ on $\R^{n+2}$. In particular, the action of $\g{k}_1$ on $\g{n}_1$ splits into a trivial one-dimensional module $\R \xi$ and a standard $\g{so}_{n+1}$-module $\xi^\perp$.

Let $\g{v}$ be a subspace of $\g{n}_1$ in the conditions of Proposition~\ref{prop:subtle}, i.e.\ $\g{v}\in \mathcal{V}$. Then $\g{v}$ must be contained in $\xi^\perp$, because otherwise there could not exist a subgroup of $K_1^0\cong SO_{n+1}$ acting transitively on the unit sphere of $\g{v}$. If $\dim \g{v}=k$, then $N_{\g{m}_1}(\g{v})$ would be isomorphic to $\g{so}_{1,n-k+1}\oplus\g{so}_k$. But then the corresponding connected subgroup of $M_1$ could not act transitively on $B_1\cong\R H^{n+1}$, which contradicts $\g{v}\in\mathcal{V}$. Hence, the nilpotent construction for the choice $\Phi_1=\{\alpha_2\}$ does not lead to any example.

\medskip

\emph{Nilpotent construction with $\Phi_2=\{\alpha_1\}$}. Now we have
\begin{align*}
\g{n}^1_2&=\g{g}_{\alpha_2}\oplus\g{g}_{\alpha_1+\alpha_2}\cong\R^{2n},
\\
\g{l}_2&=\g{g}_{-\alpha_1}\oplus\g{g}_0\oplus\g{g}_{\alpha_1}=\g{g}_2\oplus\g{a}_2\cong\g{sl}_2(\R)\oplus\g{so}_n\oplus\R,
\\
\g{k}_2&=\g{k}_{\alpha_1}\oplus\g{k}_0=\g{so}_{2}\oplus\g{so}_n.
\end{align*}
The representation of $\g{g}_2$ on $\g{n}^1_2$ is equivalent to the exterior tensor product representation of $\g{sl}_2(\R)\oplus\g{so}_n$ on $\R^2\otimes\R^n\cong\R^{2n}$. Similarly, the representation of $K_2^0 \cong SO_2 \times SO_n$ on $\g{n}^1_2$ is equivalent to the exterior tensor product representation $SO_2 \times SO_n$  on $\R^2\otimes\R^n\cong\R^{2n}$, which is also equivalent to the isotropy representation of $SO^0_{2,n}/SO_2SO_n$. 
Choose orthonormal bases $e_1,e_2$ of $\R^2$ and $f_1,\ldots,f_n$ of $\R^n$. We identify the tangent space of $SO^0_{2,n}/SO_2SO_n$ at a point $o$ with $\R^{2n} \cong \R^2 \otimes \R^n$. Then $e_i \otimes f_j$, $i \in \{1,2\}$, $j \in \{1,\ldots,n\}$, is a basis of the tangent space and a maximal flat is given by $\R (e_1 \otimes f_1) \oplus \R (e_2 \otimes f_2)$.
We can identify $\g{g}_{\alpha_2}$ with the span of $e_1 \otimes f_j$, $j \in \{1,\ldots,n\}$, and $\g{g}_{\alpha_1+\alpha_2}$ with the span of $e_2 \otimes f_j$, $j \in \{1,\ldots,n\}$. Let $T$ be a generator of $\g{k}_{\alpha_1}\cong \g{so}_2$.

Let $\g{v}\in\mathcal{V}$ and $v=v_1+v_2\in\g{v}$, $v\neq 0$, where $v_1\in\g{g}_{\alpha_2}$ and $v_2\in\g{g}_{\alpha_1+\alpha_2}$. First we will prove that if $[T,v_1]\perp v_2$, then $v_1=0$ or $v_2=0$. Under the assumption $[T,v_1]\perp v_2$, we can take the orthonormal bases  $e_1,e_2$ and $f_1,\ldots,f_n$ above such that $v_1$ is proportional to $e_1\otimes f_1$ and $v_2$ is proportional to $e_2\otimes f_2$. We can then write $v=r e_1\otimes f_1+s e_2\otimes f_2$, for some $r$, $s\in \R$. 
Let $A+S$ be an element of $N_{\g{m}_2}(\g{v})\subset \g{sl}_{2}(\R)\oplus\g{so}_n$, where $A\in\g{sl}_{2}(\R)$ and $S\in \g{so}_n$. If we put $A=\begin{pmatrix}
a & b\\ c & -a
\end{pmatrix}$, then it acts on $\g{n}_2^1$ by
\[
[A,e_1\otimes f_j]=a e_1\otimes f_j + c e_2\otimes f_j,\qquad [A,e_2\otimes f_j]=b e_1\otimes f_j - a e_2\otimes f_j, \qquad j=1,\dots,n.
\]
Then, on the one hand:
\begin{align*}\label{eq:actionAS}\nonumber
[A+S,r e_1\otimes f_1+s e_2\otimes f_2]={}&ar e_1\otimes f_1 -a s e_2\otimes f_2 +c r e_2\otimes f_1 + b s e_1\otimes f_2
\\
&+ r [S,e_1\otimes f_1]+s [S,e_2\otimes f_2]\in\g{v}.
\end{align*}
On the other hand, since $N_{\g{k}_2}(\g{v})$ acts transitively on the unit sphere of $\g{v}$, we have the orthogonal decomposition 
\[
\g{v}=\R(r e_1\otimes f_1+s e_2\otimes f_2)\oplus [N_{\g{k}_2}(\g{v}),r e_1\otimes f_1+s e_2\otimes f_2],
\]
where the second addend is always orthogonal to the vectors $e_1\otimes f_1$ and $e_2\otimes f_2$. But altogether we deduce that either $a=0$, or $r$ or $s$ must vanish. If both $r$ and $s$ are nonzero, then the projection of $N_{\g{m}_2}(\g{v})$ onto $\g{p}$ has at most dimension $1$, which implies that $N_{M_2}^0(\g{n}_{2,\g{v}})$ cannot act transitively on $B_2\cong\R H^2$, thus contradicting the assumption $\g{v}\in\mathcal{V}$. Hence, either $r=0$ or $s=0$, and the claim follows.

Now, by conjugating by an element of $K_2^0$, we can assume that there is a unit element in $\g{v}$ of the form $r e_1\otimes f_1+s e_2\otimes f_2$, for some $r$, $s\in \R$.  Because of the claim proved above, we have that either $r=0$ or $s=0$. Again, via conjugation by an element of $K_2^0$, it is not restrictive to assume that $s=0$, so that $e_1\otimes f_1\in\g{v}$.

If $T\perp N_{\g{k}_2}(\g{v})$, then $\g{v}$ can be any subspace of $\g{g}_{\alpha_2}$, thus producing one of the actions described in \cite[p.~145]{BT:crelle}, which are orbit equivalent to the actions described in (6), for $k\geq 1$.

We finally consider the case where $tT+S\in N_{\g{k}_2}(\g{v})$, for some nonzero $t\in \R$ and some $S\in \g{k}_0\cong\g{so}_n$. In this situation we have that 
\[
t e_2\otimes f_1+[S,e_1\otimes f_1]=[tT+S,e_1\otimes f_1]\in \g{v}.
\]
Since $[S,e_1\otimes f_1]\in \g{g}_{\alpha_2}$, $e_2\otimes f_1\in\g{g}_{\alpha_1+\alpha_2}$, $[T,[S,e_1\otimes f_1]]\perp e_2\otimes f_1$ and $t\neq 0$, the claim above implies that $[S,e_1\otimes f_1]=0$. Let now $S'\in N_{\g{k}_2}(\g{v})\cap\g{k}_0$. Then $tT+S+S'\in N_{\g{k}_2}(\g{v})$ and hence
\[
t e_2\otimes f_1+[S',e_1\otimes f_1]=[tT+S+S',e_1\otimes f_1]\in\g{v}
\]
But again by the claim we have that $[S',e_1\otimes f_1]=0$. Therefore we have shown that $[N_{\g{k}_2}(\g{v}), e_1\otimes f_1]=\R (e_2\otimes f_1)$, and hence, $\g{v}=\R (e_1\otimes f_1)\oplus\R (e_2\otimes f_1)$. This means that the nilpotent construction in this case produces a cohomogeneity one action with a singular orbit of codimension $2$. However, this action is orbit equivalent to the one described in~(4).
\end{proof}



\begin{thebibliography}{99}

\bibitem{BB:crelle} J.~Berndt, M.~Br\"{u}ck, Cohomogeneity one actions on hyperbolic spaces, \emph{J.\ Reine Angew.\ Math.}\ \textbf{541} (2001), 209--235.

\bibitem{BT:jdg} J.~Berndt, H.~Tamaru, Homogeneous codimension one foliations on noncompact symmetric spaces, \emph{J.\ Differential Geom.}\ \textbf{63} (2003), no.~1, 1--40.

\bibitem{BT:tohoku} J.~Berndt, H.~Tamaru, Cohomogeneity one actions on noncompact symmetric spaces with a totally geodesic singular orbit, \emph{T\^ohoku Math.\ J.}\ (2) \textbf{56} (2004), no.~2, 163--177.

\bibitem{BT:tams} J.~Berndt, H.~Tamaru, Cohomogeneity one actions on noncompact symmetric spaces of rank one, \emph{Trans.\ Amer.\ Math.\ Soc.}\ \textbf{359} (2007), no. 7, 3425--3438.

\bibitem{BT:crelle} J.\ Berndt, H.\ Tamaru, Cohomogeneity one actions on symmetric spaces of noncompact type, \emph{J.\ Reine Angew.\ Math.} \textbf{683} (2013), 129--159.

\bibitem{DD:advances} J.~C.~D\'iaz-Ramos, M.~Dom\'inguez-V\'azquez, Isoparametric hypersurfaces in Damek-Ricci spaces, \emph{Adv.\ Math.}\ \textbf{239} (2013), 1--17.

\bibitem{Knapp} A.~W.~Knapp, \emph{Lie groups beyond an introduction}, Second edition, Progress in Mathematics, \textbf{140}, Birkh\"auser Boston, Inc., Boston, MA, 2002.

\bibitem{Kollross:tams} A.~Kollross, A classification of hyperpolar and cohomogeneity one actions, \emph{Trans.\ Amer.\ Math.\ Soc.}\ \textbf{354} (2002), no.~2, 571--612.

\bibitem{Onishchik} A.~L.~Onishchik, E.~B.~Vinberg (Eds.), \emph{Lie groups and Lie algebras III. Structure of Lie groups and Lie algebras}, Encyclopaedia of Mathematical Sciences, 41, Springer-Verlag, Berlin, 1994.

\bibitem{Simon} B.~Simon, \emph{Representations of finite and compact groups}, Graduate Studies in Mathematics, \textbf{10}, American Mathematical Society, Providence, RI, 1996.

\bibitem{Tamaru:dga} H.~Tamaru, The local orbit types of symmetric spaces under the actions of the isotropy subgroups, \emph{Differential Geom.\ Appl.}\ \textbf{11} (1999), no.~1, 29--38.


\end{thebibliography}
\end{document}